\newtheorem{thm}{Theorem}[section]
\newtheorem{lemma}[thm]{Lemma}
\newtheorem{prop}[thm]{Proposition}
\newtheorem{cor}[thm]{Corollary}
\newtheorem{example}[thm]{Example}
\newcommand{\mb}{\mathbb}
\newcommand{\C}{\mb C}
\newcommand{\Z}{\mb Z}
\newcommand{\OO}{\mathcal O}
\newcommand{\F}{\mathcal F}
\numberwithin{equation}{section}       
\title[Character and webs]{Character varieties, hexagonal 3-webs and Hess connection.}
\author[K. Diarra]{Karamoko Diarra}
\address{DER de Mathématiques et d’informatique, FAST, Université des Sciences, des Techniques et des
Technologies de Bamako, BP: E 3206 Mali.}
\email{karamoko.diarra2005@gmail.com}
\author[A. A. Diaw]{Adjaratou Arame Diaw}
\address{Univ Cheikh Anta Diop, Dakar, Senegal.}
\email{aramdiaw2@gmail.com}
\author[F. Loray]{Frank Loray}
\address{Univ Rennes, CNRS, IRMAR - UMR 6625, F-35000 Rennes, France.}
\email{frank.loray@univ-rennes.fr}
\author[B. Tangue Ndawa]{Bertuel Tangue Ndawa}
\address{University of Ngaoundéré,
University Institute of Technology, Department of Computer Engineering, N1, Dang, 455, Ngaoundéré, Cameroon.}
\email{bertuel.tangue@imsp-uac.org}
\thanks{The authors warmly thank Ludovic Rifford and Moustafa Fall for invitation at AIMS M'Bour Center, 
and Chaire Afrique CNRS for financial support. This work was done during our joint visit at M'Bour.
The three last authors also thank CIRM for hospitality, we started our collaboration there. 
The third author thanks  Universit\'e de Rennes 1, IRMAR, CNRS and France 2030 program Centre Henri Lebesgue ANR-11-LABX-0020-01 for constant support.}
\begin{document}
\begin{abstract} 
We investigate the existence of some affine structure on $\mathrm{SL}_2(\C)$-character varieties 
on the four-punctured sphere through web and symplectic geometry. This work provides a new proof
of non integrability (i.e. irreducibility) of Painlevé VI equations, previously proved by S. Cantat and the third author.
\end{abstract}
\maketitle

\sloppy
\tableofcontents

\section{Introduction}
In this paper, we consider the family of cubic hypersurfaces of $\C^3$:
\begin{equation}\label{eq:cubicsurface}
S_{\bold{a}}=\{(x_1,x_2,x_3)\in\C^3\ ;\ x_1^2+x_2^2+x_3^2+x_1 x_2 x_3=a_1 x_1+a_2 x_2+a_3 x_3+a_4\ \}
\end{equation}
where $\bold{a}=(a_1,a_2,a_3,a_4)\in\C$ are complex parameters. These surfaces appear as GIT quotients of the space
of representations $\mathrm{Hom}(\pi_1(\Sigma),\mathrm{SL}_2(\C))$ by the action of $\mathrm{SL}_2(\C)$
by conjugacy. See for instance \cite{BenedettoGoldman,Iwasaki,CantatLoray}. Here, parameters $A,B,C,D$ depend 
directly on conjugacy classes imposed at the punctures. These affine surfaces admit a group of polynomial
automorphisms generated (for generic parameters) by the three involutions
\begin{equation}\label{eq:involutions}
\left\{\begin{matrix}
\sigma_1:(x_1,x_2,x_3)\mapsto(a_1-x_1-x_2 x_3,\ x_2,\ x_3)\\
\sigma_2:(x_1,x_2,x_3)\mapsto(x_1,\ a_2-x_2-x_1 x_3,\ x_3)\\
\sigma_3:(x_1,x_2,x_3)\mapsto(x_1,\ x_2,\ a_3-x_3-x_1 x_2)
\end{matrix}\right.
\end{equation}
This group $\Gamma=\langle \sigma_1,\sigma_2,\sigma_3 \rangle$ generated is the free product of these involutions, 
i.e. there are no other relations between them.
There is a holomorphic symplectic structure (or volume form) defined on $S_{\bold{a}}$ by the holomorphic $2$-form
\begin{equation}\label{eq:symplectic form}
\Omega:=\frac{dx_1\wedge dx_2}{2x_3+x_1 x_2-a_3}=\frac{dx_2\wedge dx_3}{2x_1+x_2 x_3-a_1}
=\frac{dx_3\wedge dx_1}{2x_2+x_1 x_3-a_2};
\end{equation}
here, the equalities can be deduced by taking exterior product of $dx_i$'s with the differential of the equation of $S_{\bold{a}}$. We note that $(\sigma_i)^*\Omega=-\Omega$ for $i=1,2,3$
so that the $2$-index subgroup $\Gamma_2=\langle \sigma_1\circ\sigma_2,\sigma_2\circ\sigma_3 \rangle$ is symplectic,
i.e. preserving $\Omega$. We note that $\Gamma_2$ is the free-group
with two generators, and it can be viewed as the monodromy of the Painlevé VI equation (or foliation), 
see \cite{DubrovinMazzocco,InabaIwasakiSaito,CantatLoray}. This relationship is used in \cite{CantatLoray}
to prove irreducibility of Painlevé VI equations for generic parameters. The motivation of our work (see section \ref{sec:motivation})
is to simplify the end of the proof of the main Theorem of \cite{CantatLoray}.

In this paper, we consider the geometry of the three foliations given by the level curves of coordinates
\begin{equation}\label{eq:feuilletagesconic}
\F_1=\ker(dx_1),\ \ \ \F_2=\ker(dx_2),\ \ \ \text{and}\ \ \ \F_3=\ker(dx_3)
\end{equation}
in restriction to the surface $S_{\bold{a}}$. Our first result is about the nature of the $3$-web 
induced on the smooth part of the surface. Recall that a $3$-web is parallelizable if there exist
local coordinates through which the $3$-web is is defined by $3$ constant foliations (i.e. by parallel lines).
In general, a $3$-web is not parallelizable and one can define a curvature, a $2$-form, which measures
the obstruction to be parallelizable. We succeed to compute this $2$-form and prove

\begin{thm}\label{thm:3webpara} The $3$-web $(\F_1,\F_2,\F_3)$ on $S_{\bold{a}}$ is 
locally parallelizable if, and only if, $\bold{a}=(0,0,0,4)$.
\end{thm}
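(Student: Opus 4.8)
The plan is to apply the Blaschke--Chern theory of planar $3$-webs. Once the three defining $1$-forms $\eta_1,\eta_2,\eta_3$ are normalized so that $\eta_1+\eta_2+\eta_3=0$, there is a unique $1$-form $\gamma$ with $d\eta_i=\gamma\wedge\eta_i$ for $i=1,2,3$; its exterior derivative $K:=d\gamma$ is the curvature, and the web is locally parallelizable if and only if $K\equiv 0$. Everything thus reduces to computing $K$ on $S_{\bold a}$ and deciding for which $\bold a$ it vanishes identically.

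First I would exhibit a normalized coframe at no cost. Set $p_i:=2x_i+x_jx_k-a_i$ for $\{i,j,k\}=\{1,2,3\}$, the partial derivative of the equation of $S_{\bold a}$; differentiating that equation gives $p_1\,dx_1+p_2\,dx_2+p_3\,dx_3=0$ on $S_{\bold a}$. Hence $\eta_i:=p_i\,dx_i$ defines $\F_i=\ker(dx_i)$ (away from $\{p_i=0\}$) and already satisfies $\eta_1+\eta_2+\eta_3=0$. Reading $dx_j\wedge dx_k=p_i\,\Omega$ cyclically off \eqref{eq:symplectic form}, a short computation yields $d\eta_i=(x_jp_j-x_kp_k)\,\Omega$, and since $\eta_1\wedge\eta_2=p_1p_2p_3\,\Omega$ I can read off the connection in closed form,
\[
\gamma=-\frac{1}{p_1p_2p_3}\sum_{i=1}^{3}x_ip_i\,\eta_i=-\sum_{i=1}^{3}\frac{x_ip_i}{p_jp_k}\,dx_i.
\]
Differentiating, substituting $dx_3=-(p_1\,dx_1+p_2\,dx_2)/p_3$, and collecting the coefficient of $dx_1\wedge dx_2=p_3\,\Omega$ expresses the curvature as $K=\kappa\,\Omega$ for an explicit rational function $\kappa$ on $S_{\bold a}$. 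The theorem becomes the assertion that $\kappa\equiv 0$ on $S_{\bold a}$ if and only if $\bold a=(0,0,0,4)$.

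For the implication that $\bold a=(0,0,0,4)$ gives a flat web I would bypass $\kappa$ and linearize directly. On $S_{(0,0,0,4)}$ the map $(\alpha,\beta)\mapsto\big(\alpha+\alpha^{-1},\ \beta+\beta^{-1},\ -\alpha\beta-\alpha^{-1}\beta^{-1}\big)$ is a parametrization, by the classical Fricke identity $x_1^2+x_2^2+x_3^2-x_1x_2x_3=4$ together with the sign change $x_3\mapsto-x_3$. In the coordinates $(s,t):=(\log\alpha,\log\beta)$ one has $x_1=2\cosh s$, $x_2=2\cosh t$ and $x_3=-2\cosh(s+t)$, so $\F_1,\F_2,\F_3$ pull back to $\{s=\mathrm{const}\}$, $\{t=\mathrm{const}\}$ and $\{s+t=\mathrm{const}\}$, three families of parallel lines. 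This proves parallelizability and explains the distinguished value $(0,0,0,4)$.

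The hard part will be the converse. Here I would use the general expression for $\kappa$: writing its numerator as $N(\bold x,\bold a)$ and reducing modulo the defining polynomial of $S_{\bold a}$ (eliminating $x_3$), the condition $N\equiv 0$ on $S_{\bold a}$ turns into a system of polynomial equations in the parameters $\bold a$, and the content of the statement is that this system cuts out exactly the point $(0,0,0,4)$. The difficulty is not conceptual but computational: the elimination modulo the surface equation is heavy and best handled with computer algebra, the delicate point being to organize the reduction so that the constraints on $\bold a$ are extracted cleanly.
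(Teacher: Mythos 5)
Your proposal is correct, and its backbone coincides with the paper's: characterize local parallelizability by the vanishing of the Blaschke curvature (the paper's Lemmas \ref{lem:normalforms3webs}, \ref{lem:theta} and Theorem \ref{lem:Blaschkecurvature2form}), compute that curvature on $S_{\bold{a}}$, reduce its numerator modulo $P$ to a polynomial of degree $1$ in $x_3$ (legitimate since $P$ is monic of degree $2$ in $x_3$), and eliminate to get conditions on $\bold{a}$. Two of your ingredients, however, genuinely differ from the paper, both to good effect. First, your normalization is intrinsic: since $dP=0$ on $S_{\bold{a}}$, the coframe $\eta_i=p_i\,dx_i$ with $p_i=P_{x_i}$ satisfies \eqref{eq:sumzero} at no cost, and your closed formulas check out — indeed $d\eta_1=dp_1\wedge dx_1=(x_2p_2-x_3p_3)\,\Omega$ using $dx_j\wedge dx_k=p_i\Omega$, and one verifies $\gamma\wedge\eta_1=(x_2p_2-x_3p_3)\,\Omega$ for your $\gamma=-\sum_i\frac{x_ip_i}{p_jp_k}\,dx_i$, the other two cases following by cyclic symmetry, so uniqueness in Lemma \ref{lem:theta} gives the connection form in closed form. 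The paper instead projects to the $(x_1,x_2)$-plane and applies the generic formula of Proposition \ref{prop:computecurvdxdy} with $a=P_{x_1}$, $b=P_{x_2}$ and the derivations \eqref{eq:partialderivative}; your symmetric coframe keeps the $S_3$-symmetry visible and gives a tidier starting point for the same reduction. Second, for sufficiency you bypass the computation with the Fricke parametrization $x_1=2\cosh s$, $x_2=2\cosh t$, $x_3=-2\cosh(s+t)$, which is valid (the identity $x^2+y^2+z^2-xyz=4$ with the sign change $x_3=-z$ lands on $S_{(0,0,0,4)}$) and exhibits the three foliations as the parallel families $s$, $t$, $s+t$ constant; since this map is generically étale, it parallelizes the web at generic points, and vanishing of the holomorphic curvature there propagates to the whole regular locus, which suffices. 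The paper obtains this direction only as a byproduct of its Gröbner computation, though your argument is exactly the conceptual explanation behind the quotient $\Pi:\C^2\to S_{(0,0,0,4)}$ mentioned in Section \ref{sec:motivation}. For the converse you are precisely as far along as the paper: both reduce it to checking that the ideal of coefficients of the reduced numerator cuts out exactly $a_1=a_2=a_3=a_4-4=0$, and both delegate that elimination to computer algebra; the paper asserts the Gröbner basis computation confirms it, so your plan closes.
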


In the second part of our paper, we consider only two among the foliations, say  $(\F_1,\F_2)$,
at the neighborhood of a generic point of $S_{\bold{a}}$ where they are transversal.
Together with the symplectic form $\Omega$, they locally define a bi-Langrangian structure
on $S_{\bold{a}}$. Following H. Hess and M. N. Boyom, one can associate a torsion-free connection
$\nabla$ on the tangent bundle $T_{S_{\bold{a}}}$, which is the unique one that parallelizes $\Omega$
and preserves $\F_1$ and $\F_2$. The flatness of this connection is given by 
the vanishing of its curvature, namely $\nabla\cdot\nabla$.
We succeed to compute the curvature of this connection, and we prove:

\begin{thm}\label{thm:bilagrang} The bi-Lagrangian manifold $(S_{\bold{a}},\Omega,\F_1,\F_2)$
has flat Hess connection if, and only if, $\bold{a}=(0,0,0,4)$.
\end{thm}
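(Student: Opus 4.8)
The plan is to pass to local coordinates adapted to the two foliations, where the bi-Lagrangian connection takes a very simple form, and then to reduce the flatness of $\nabla$ to a single scalar condition which I can read off as a polynomial identity in the parameters $\bold{a}$. First I would fix a generic point and use $(x_1,x_2)$ as local coordinates on $S_{\bold{a}}$; this is legitimate exactly where $\F_1$ and $\F_2$ are transversal, i.e. where $dx_1\wedge dx_2\neq 0$. In these coordinates the leaves of $\F_1$ (resp. $\F_2$) are the lines $\{x_1=\mathrm{const}\}$ (resp. $\{x_2=\mathrm{const}\}$), so that $T\F_1=\langle\partial_{x_2}\rangle$ and $T\F_2=\langle\partial_{x_1}\rangle$, while the symplectic form reads $\Omega=f\,dx_1\wedge dx_2$ with $f=1/(2x_3+x_1x_2-a_3)$, where $x_3=x_3(x_1,x_2)$ is the function implicitly defined by the equation of $S_{\bold{a}}$.

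Next I would pin down the Hess connection explicitly. Requiring that $\nabla$ preserve the two line fields $\langle\partial_{x_1}\rangle$ and $\langle\partial_{x_2}\rangle$ kills every Christoffel symbol except $\Gamma^1_{11}$ and $\Gamma^2_{22}$, using torsion-freeness together with $[\partial_{x_1},\partial_{x_2}]=0$ to get the symmetry $\Gamma^k_{ij}=\Gamma^k_{ji}$; then $\nabla\Omega=0$ forces $\Gamma^1_{11}=\partial_{x_1}\log f$ and $\Gamma^2_{22}=\partial_{x_2}\log f$, and no freedom remains, which also re-proves uniqueness in this case. Computing $R(\partial_{x_1},\partial_{x_2})$ for this connection, both nonzero components of the curvature turn out to equal $\pm\,\partial_{x_1}\partial_{x_2}\log f$, so that
\[
\nabla\ \text{is flat}\iff \frac{\partial^2\log f}{\partial x_1\,\partial x_2}=0 .
\]

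The crucial step, and the one I expect to be the real obstacle, is to make this condition computable even though $x_3$ is only implicitly (and multivaluedly) defined. Writing $P=2x_3+x_1x_2-a_3$ so that $f=1/P$, the quadratic equation $x_3^2+(x_1x_2-a_3)x_3+(x_1^2+x_2^2-a_1x_1-a_2x_2-a_4)=0$ satisfied by $x_3$ on $S_{\bold{a}}$ yields the clean identity $P^2=\Delta$, where
\[
\Delta:=(x_1x_2-a_3)^2-4(x_1^2+x_2^2-a_1x_1-a_2x_2-a_4)
\]
is an honest polynomial of bidegree $(2,2)$ in $(x_1,x_2)$. Hence $\log f=-\tfrac12\log\Delta$, and the flatness condition becomes the purely polynomial statement $\partial_{x_1}\partial_{x_2}\log\Delta=0$, equivalently $\Delta\cdot\partial_{x_1}\partial_{x_2}\Delta=\partial_{x_1}\Delta\cdot\partial_{x_2}\Delta$.

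Finally I would solve this identity. Vanishing of $\partial_{x_1}\partial_{x_2}\log\Delta$ is equivalent to $\Delta$ splitting as a product $g(x_1)\,h(x_2)$; since $\Delta$ has leading term $x_1^2x_2^2$, I may take $g,h$ monic quadratics and write $\Delta=(x_1^2+\alpha x_1+\beta)(x_2^2+\gamma x_2+\delta)$. Matching coefficients against the expansion of $\Delta$, the absence of $x_1^2x_2$ and $x_1x_2^2$ terms forces $\alpha=\gamma=0$, the $x_1^2,x_2^2$ terms give $\delta=\beta=-4$, the $x_1x_2$ term gives $a_3=0$, the $x_1,x_2$ terms give $a_1=a_2=0$, and the constant term gives $\beta\delta=16=a_3^2+4a_4$, whence $a_4=4$. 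Conversely, for $\bold{a}=(0,0,0,4)$ one computes $\Delta=(x_1^2-4)(x_2^2-4)$, which is manifestly separable, so $\nabla$ is flat. This establishes the stated equivalence.
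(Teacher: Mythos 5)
Your proof is correct, and its skeleton coincides with the paper's: both pass to the local coordinates $(x_1,x_2)$ at a generic point (valid exactly where $dx_1\wedge dx_2\neq 0$ on the surface), both identify the Hess connection as having only $\Gamma^1_{11}=\partial_{x_1}\log f$ and $\Gamma^2_{22}=\partial_{x_2}\log f$ nonzero (the paper gets this from Boyom's formula, Proposition \ref{prop:Boyom}, or from Hess's uniqueness; your Christoffel bookkeeping from torsion-freeness plus preservation of the two line fields is an equally valid derivation), and both reduce flatness to $\partial_{x_1}\partial_{x_2}\log f=0$, which is precisely Corollary \ref{cor:FlatHess}, $f_{x_1x_2}\cdot f=f_{x_1}\cdot f_{x_2}$. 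Where you genuinely diverge is in the elimination of $x_3$. The paper differentiates $f=1/(2x_3+x_1x_2-a_3)$ implicitly via the vector fields (\ref{eq:partialderivative}), obtains a large rational curvature with denominator $(2x_3+x_1x_2-a_3)^6$, reduces the numerator modulo $P=0$ to degree $1$ in $x_3$, and settles the simultaneous vanishing of coefficients by explicit (computer-assisted) computation. You instead observe that $2x_3+x_1x_2-a_3=\pm\sqrt{\Delta}$, where $\Delta$ is the discriminant of the surface equation viewed as a monic quadratic in $x_3$, so that $\partial_{x_1}\partial_{x_2}\log f=-\tfrac12\,\partial_{x_1}\partial_{x_2}\log\Delta$ (the branch sign only shifts the logarithm by a constant and is harmless), turning flatness into separability $\Delta=g(x_1)h(x_2)$ of an explicit bidegree-$(2,2)$ polynomial, solved by hand via coefficient matching; I checked the matching, and indeed it forces $a_1=a_2=a_3=0$, $a_4=4$, while conversely $\Delta=(x_1^2-4)(x_2^2-4)$ for $\bold{a}=(0,0,0,4)$. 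This buys a fully by-hand proof with no implicit function and no Gr\"obner basis. Two minor points deserve a line each, though neither is a gap: (i) the equivalence ``$\partial_{x_1}\partial_{x_2}\log\Delta=0$ iff $\Delta$ splits as a polynomial product'' follows because locally $\Delta=e^{u(x_1)}e^{v(x_2)}$, whence the identity $\Delta(x_1,x_2)\,\Delta(a,b)=\Delta(x_1,b)\,\Delta(a,x_2)$ holds near a base point $(a,b)$ with $\Delta(a,b)\neq0$ and therefore, both sides being polynomials, holds identically, producing polynomial factors; (ii) your computation is local at a generic point, but the curvature of the Hess connection is a holomorphic tensor on the locus where the bi-Lagrangian structure is defined, so its vanishing on one open set propagates by analytic continuation --- and the locus $\Delta\neq0$ is exactly where the projection to $(x_1,x_2)$ is \'etale and $\F_1,\F_2$ are transversal, so genericity costs nothing.
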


In \cite{vdPutSaito}, M. van der Put and M.-H. Saito construct the character varieties associated
to other Painlevé equations I-V. They are also given by cubic affine surfaces $S=\{P(x_1,x_2,x_3)=0\}\subset\C^3$
where the list of polynomials $P$ (with parameters) is given in (\ref{eq:ListCharaterPainleveSvdP}).
They are a kind of degenerate versions of the family $S_{\bold{a}}$. They also admit a natural symplectic 
structure (\ref{eq:SymplecticPVtoPI}) which coincides with the more general one defined by P. Boalch in \cite{Boalch}.
We can define (singular) foliations $\F_i:=\ker{dx_i}$ as before and address the same questions to these surfaces.
We prove:

\begin{thm} For any of the cubic surfaces $S$ in the list (\ref{eq:ListCharaterPainleveSvdP}),
we have:
\begin{itemize}
\item the $3$-web $(\F_1,\F_2,\F_3)$ is not parallelizable except for $PII^{FN},PII,PI$;
\item for at least one bi-Lagrangian manifold $(S,\Omega,\F_i,\F_j)$
defined by two among three foliations $\F_1,\F_2,\F_3$,
the corresponding Hess connection is not flat.
\end{itemize}
\end{thm}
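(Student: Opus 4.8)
The plan is to treat the statement as a finite case analysis: for each cubic $S=\{P=0\}$ in the list (\ref{eq:ListCharaterPainleveSvdP}) I would run the two curvature computations already used to prove Theorems \ref{thm:3webpara} and \ref{thm:bilagrang}, now with the degenerate polynomial $P$ and the symplectic form (\ref{eq:SymplecticPVtoPI}) in place of (\ref{eq:symplectic form}). Before any formula is applied I would first check, for each $S$, that a generic smooth point carrying three pairwise transversal foliations $\F_1,\F_2,\F_3$ actually exists, so that both the web curvature and the Hess connection are defined there; this is where the degenerations must be handled individually, since some members of the list are more singular and have larger tangency loci than $S_{\bold a}$.

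For the first bullet I would compute the Blaschke curvature of $(\F_1,\F_2,\F_3)$ as in the proof of Theorem \ref{thm:3webpara}: restrict to the smooth locus, eliminate one coordinate using $P=0$ to obtain the slopes of the three foliations in local coordinates, normalize the defining $1$-forms so that $\omega_1+\omega_2+\omega_3=0$, take the unique connection form $\gamma$ with $d\omega_i=\gamma\wedge\omega_i$, and read off $K=d\gamma$. The web is parallelizable exactly when $K\equiv0$, and in each case $K$ comes out as a rational $2$-form whose numerator is a polynomial in the coordinates and the parameters. For the three exceptional members $PII^{FN},PII,PI$ I expect this numerator to vanish identically, and I would confirm parallelizability conceptually rather than by cancellation: these cubics are close enough to the monomial form $x_1x_2x_3=\text{const}$ that the relation $P=0$ can be solved for one coordinate in a shape that separates after an explicit (logarithmic-type) change of variable, which directly exhibits parallelizing coordinates and, equivalently, a nontrivial abelian relation. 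For the remaining surfaces I would instead display a single point or monomial at which the numerator of $K$ is nonzero.

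For the second bullet the task is lighter, since only one non-flat bi-Lagrangian structure is required per surface. For a chosen transversal pair $(\F_i,\F_j)$ and the form $\Omega$ from (\ref{eq:SymplecticPVtoPI}), the Hess connection $\nabla$ is the unique torsion-free connection with $\nabla\Omega=0$ preserving $\F_i$ and $\F_j$; in bi-Lagrangian coordinates its Christoffel symbols are determined by the slopes of the two foliations and the local expression of $\Omega$, and its curvature $\nabla\cdot\nabla$ collapses to a single scalar function, exactly as in the proof of Theorem \ref{thm:bilagrang}. For each $S$ it then suffices to pick one pair $(i,j)$, compute this scalar as an explicit rational function, and evaluate it at a convenient point to see that it does not vanish.

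The main obstacle is organizational rather than conceptual: the list is long, the surfaces degenerate in several different ways, and one must repeatedly verify transversality and the non-degeneracy of $\Omega$ at the chosen base points before the curvature formulas are meaningful. The genuinely delicate direction is the positive one for the first bullet---certifying that the web curvature vanishes \emph{identically} for $PII^{FN},PII,PI$ and not merely at special points---which is why I would prove those three cases by producing parallelizing coordinates outright instead of relying on a symbolic simplification of $K$ to zero. The non-flatness assertions for the Hess connection, needing only one nonvanishing evaluation apiece, are by comparison routine.
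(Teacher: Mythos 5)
Your plan is correct and is, in substance, the paper's own proof: a finite case analysis in which, for each cubic $P$ in the list and for the Boalch form (\ref{eq:SymplecticPVtoPI}), one projects to the $(x_1,x_2)$-plane, computes the Blaschke curvature and the Hess curvature scalar as rational functions, reduces the numerator modulo $P=0$ to degree at most one in $x_3$, and tests whether the resulting coefficients vanish. The paper organizes this as a table of coefficient ideals in the parameters, for the web and for all three pairs $(\F_i,\F_j)$, and concludes exactly as you propose: ideal $(1)$, or an ideal such as $(s_2^2)$, $(s_3)$ or $(\alpha)$ whose zero locus is excluded by the $\C^*$-conditions on the parameters, gives non-parallelizability or non-flatness, while the zero ideal gives identical vanishing. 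Your one genuine departure is the positive direction of the first bullet: where the paper certifies parallelizability for $PII^{FN},PII,PI$ by the symbolic vanishing of the reduced numerator, you propose exhibiting parallelizing coordinates by separation of variables. This works and is arguably the cleaner certificate: e.g.\ for $PI$ the projected web is $\bigl(dx_1,dx_2,\ker\bigl(\tfrac{dx_1}{x_1(x_1+1)}+\tfrac{dx_2}{x_2(x_2+1)}\bigr)\bigr)$ (as the paper itself notes), for $PII$ one finds $\omega_3\propto \tfrac{dx_1}{(x_1-1)(x_1-\alpha)}+\tfrac{dx_2}{(x_2-1)(\alpha x_2-1)}$, and similarly for $PII^{FN}$; in each case the third slope has the form $a(x_1)dx_1+b(x_2)dx_2$, so Proposition \ref{prop:computecurvdxdy} gives $\kappa=0$ at a glance. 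Two cautions for your ``pick one pair, evaluate at one point'' shortcut in the second bullet. First, the choice of pair matters: several pairs are genuinely flat --- $(\F_1,\F_2)$ for $PV^{\mathrm{deg}}$, $PIII(D6)$, $PIII(D7)$ and $PI$, and $(\F_1,\F_3)$ for $PIII(D8)$ --- so a carelessly chosen pair proves nothing (and note that for $PI$ the flat pair is $(\F_1,\F_2)$, with $f=(x_1x_2)^{-1}$ satisfying $f_{x_1x_2}f=f_{x_1}f_{x_2}$ identically, so the non-flat witness must be $(\F_2,\F_3)$ or $(\F_1,\F_3)$). Second, when the curvature numerator is proportional to a parameter (the web curvature for $PIV$ lies in $(s_2^2)$, the Hess curvature of $(\F_1,\F_2)$ for $PV$ in $(s_3)$, of $(\F_1,\F_3)$ for $PII$ in $(\alpha)$), a pointwise evaluation yields a polynomial in the parameters, and nonvanishing must be argued uniformly over the admissible parameter space using the constraints $s_2,s_3,\alpha\in\C^*$ from the list --- a step your proposal gestures at but should make explicit, since it is exactly what the paper's ideal-theoretic bookkeeping accomplishes.
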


Of course, this is only heuristic, experimental, as the general strategy of \cite{CantatLoray} to irreducibility 
of Painlevé equations (in particular Theorem \ref{lem:parallelizable}) fails for Painlevé equation $PI$ to $PV$
because of the lack of dynamics so far. It would be interesting to investigate this direction with the recent
works of \cite{Klimes,PaulRamis1,PaulRamis2}. On the other hand, for higher dimensional character variaties,
W. Goldman defined analogues of our foliations $\F_i$ which turn to be Lagrangian for the natural 
symplectic structure. Then it would be intersting to address the questions of this article to this more general setting.

\section{Motivations and applications: Painlevé VI equations}\label{sec:motivation}

The irreducibility of Painlevé equations means, roughly speaking, that their integration cannot reduce 
as  into successive integrations of linear and/or first order non linear differential equations. This has been
proved for Painlevé I equation  by K. Nishioka and H. Umemura independantly using non linear differential
Galois theory. Another approach has been developped by G. Casale in \cite{CasalePainleve1,CasaleNishioka}
using B. Malgrange's Galois groupoid for foliations \cite{Malgrange}. Casale proves in \cite{CasaleNishioka}
that, if the Galois groupoid of a Painlevé foliation is the full symplectic pseudo-group, then the corresponding 
Painlevé equation is irreducible, in the sense of Nishioka-Umemura above. By this way, Casale provides 
a new proof of irreducibility of Painlevé I equation in \cite{CasalePainleve1}. In order to prove that the Galois
groupoid is the full symplectic pseudo-group, it suffices to prove that the Malgrange-Galois groupoid does not
preserve web or symplectic affine structure, thanks to \'E. Cartan's classification of Lie pseudo-groups in dimension $2$. 
In the case of Painlevé VI, the Malgrange-Galois groupoid contains at least the monodromy of the foliation.
Therefore, the proof of the irreducibility of a Painlevé VI equation reduces to exclude the existence of a web or affine
structure on the character variety $S_{\bold{a}}$ that is invariant under the monodromy group $\Gamma_2$.
This is how irreducibility is proved for most of Painlevé VI equations in \cite{CantatLoray}. Namely, it is proved
that 

\begin{thm}\cite[Prop. 5.3, Th. 5.4]{CantatLoray}\label{thm:NoAffine} The dynamical system $\left(S_{\bold{a}},\Gamma_2\right)$ admits no invariant holomorphic web
structure, whatever is the value of $\bold{a}$. Moreover, it admits an invariant affine structure if, and only if,
$\bold{a}=(0,0,0,4)$.
\end{thm}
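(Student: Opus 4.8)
The plan is to prove the two assertions separately, reducing each to the local curvature computations of Theorems~\ref{thm:3webpara} and~\ref{thm:bilagrang} by exploiting the dynamics of $\Gamma_2$. I would first record two structural facts. On the one hand, since $\sigma_i$ fixes the two coordinates $x_j$ ($j\neq i$), each generator $\sigma_i\circ\sigma_j$ of $\Gamma_2$ preserves exactly the foliation $\F_k$ indexed by the remaining coordinate (acting as a parabolic automorphism of the conic fibration $\ker dx_k$), while carrying the other two foliations to new ones; in particular the web $(\F_1,\F_2,\F_3)$ itself is \emph{not} $\Gamma_2$-invariant. On the other hand, after completing $S_{\bold{a}}$ to a rational surface $\overline{S_{\bold{a}}}$ (with its triangle of lines at infinity) and passing to a blow-up on which $\Gamma_2$ acts biregularly, $\Gamma_2$ is a nonelementary automorphism group and hence contains elements $g$ of positive entropy.

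For the nonexistence of an invariant web, suppose $W$ is a $\Gamma_2$-invariant holomorphic web. I would first extend $W$ across the lines at infinity, by a Riemann-type removable singularity argument, to a (possibly singular) algebraic web on $\overline{S_{\bold{a}}}$. Then $g$ permutes the finitely many foliations of $W$, so a power $g^N$ fixes each of them, exhibiting each as a $g^N$-invariant singular holomorphic foliation on a rational surface. This contradicts the absence of invariant foliations for positive-entropy automorphisms of projective surfaces, and the contradiction does not depend on $\bold{a}$, giving the first assertion.

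For the invariant affine structure, the ``if'' direction is constructive: at $\bold{a}=(0,0,0,4)$ the surface is uniformized by a finite map on which $\Gamma_2$ acts linearly (the modular action), and the push-forward of the standard flat structure of the cover yields a $\Gamma_2$-invariant affine structure, whose flatness is exactly Theorem~\ref{thm:bilagrang}. For the ``only if'' direction, let $\nabla$ be a $\Gamma_2$-invariant flat torsion-free connection. Writing $\nabla\Omega=\alpha\otimes\Omega$ for a closed $1$-form $\alpha$, invariance of $\nabla$ and of $\Omega$ makes $\alpha$ a $\Gamma_2$-invariant closed $1$-form, which the action of a positive-entropy $g$ forces to vanish; hence $\nabla$ parallelizes $\Omega$. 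Using that $\Gamma_2$ is normal of index two in $\Gamma$, together with rigidity of invariant affine structures, I would symmetrize $\nabla$ to be $\Gamma$-invariant; the involution $\sigma_i$, fixing a curve and preserving $\F_j$ and $\F_k$, then forces these two foliations to be $\nabla$-geodesic. By the uniqueness in the Hess construction, $\nabla$ is the Hess connection of $(S_{\bold{a}},\Omega,\F_j,\F_k)$, so its flatness forces $\bold{a}=(0,0,0,4)$ by Theorem~\ref{thm:bilagrang}.

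The hard part will be the ``only if'' reduction, namely showing that dynamical invariance alone pins the connection down to the algebraic bi-Lagrangian data of Theorem~\ref{thm:bilagrang}: that an invariant affine structure must parallelize $\Omega$ and must render two of the natural foliations geodesic. The rigidity/symmetrization input and the geodesy of $\F_j,\F_k$ forced by the fixed-curve involutions are the delicate steps, together with the removable-singularity extension to infinity and the precise invocation of nonexistence of invariant foliations under positive entropy in the web part. Once these are secured, Theorem~\ref{thm:bilagrang} closes the affine case and the dynamical argument closes the web case, while the parallel computation of Theorem~\ref{thm:3webpara} serves as an independent consistency check on the exceptional value $\bold{a}=(0,0,0,4)$.
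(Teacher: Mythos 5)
Your ``if'' direction matches the paper (the quotient construction $\Pi:\C^2\to S_{(0,0,0,4)}$ with $\Gamma_2$ acting through $\mathrm{SL}_2(\Z)$), and your web part is essentially a sketch of Cantat--Loray's own Prop.~5.3 --- which this paper does not reprove but simply cites --- resting on the nontrivial external fact that positive-entropy automorphisms of projective surfaces preserve no foliation, plus an unargued extension of the web across the triangle at infinity. The genuine divergence, and the genuine gaps, lie in your ``only if'' direction, which is exactly the part the paper re-proves. Two steps fail as written. First, the symmetrization: averaging $\nabla$ with $\sigma^*\nabla$ does yield a $\Gamma$-invariant torsion-free connection (using that $\Gamma_2$ is normal of index two), but the average of two flat connections is in general \emph{not} flat, and your fallback appeal to ``rigidity of invariant affine structures'' is circular --- uniqueness of the invariant structure is essentially the theorem being proved. (Your entropy argument for $\nabla\Omega=0$ is also superfluous under the paper's definition, where affine charts send $\Omega$ to $du\wedge dv$ by hypothesis; and where needed it is easier to note that $\alpha\neq 0$ would give the invariant foliation $\ker\alpha$, contradicting the first assertion.) Second, and more seriously, the claim that $\sigma_i$-invariance of $\nabla$ forces $\F_j,\F_k$ to be $\nabla$-geodesic is unsubstantiated: invariance under a single involution constrains $\nabla$ only along its fixed curve $\{2x_i+x_jx_k-a_i=0\}$, and you supply no mechanism propagating this to the whole surface. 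Moreover, even granting geodesy, Hess's uniqueness (Theorem \ref{b20}) requires the stronger condition (\ref{Bieq4}), namely $\nabla_XY\in\Gamma(\F)$ for \emph{all} $X\in\Gamma(TM)$, not merely for $X$ tangent to the foliation; geodesy alone does not pin $\nabla$ down, so the identification with the Hess connection, and hence the appeal to Theorem \ref{thm:bilagrang}, does not follow.

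The paper's engine is different and is precisely what your involution step is missing. In Theorem \ref{lem:parallelizable} it uses not the involutions but the elements $\phi=\sigma_1\circ\sigma_2\in\Gamma_2$, which preserve $\F_3$ and act on each curve $C_t=S_{\bold{a}}\cap\{x_3=t\}$ by an affine map with linear part in $\mathrm{SL}_2(\C)$ of trace $t^2-2$: for $t=\pm 2\cos(\pi p/q)$ these restrictions are periodic, so any finite-index subgroup $G<\Gamma_2$ contains elements fixing infinitely many leaves of $\F_3$ pointwise; through an affine chart each such fixed leaf is an eigenspace of a unimodular affine map, hence a line; the holomorphic leafwise-curvature function $\gamma(u,v)$ then vanishes on a Zariski-dense, accumulating family of line-leaves, hence identically, so $\F_3$ is a foliation by lines, and invariance under the resulting nontrivial shear $(u,v)\mapsto(u+tv,v)$ forces the leaves to be parallel. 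It is this density-plus-holomorphy argument that converts constraints along special curves into identical vanishing of a tensor --- the step absent from your proposal. If you wish to salvage your connection-theoretic route, replace the involutions by the $\phi$-dynamics and show the obstruction tensors (the failure of $\F_i$ to be $\nabla$-parallel, and $\nabla\Omega$) vanish along the periodic curves $C_t$; but at that point you will have essentially rederived Theorem \ref{lem:parallelizable}, after which Theorem \ref{thm:3webpara} (or Theorem \ref{thm:bilagrang}) closes the affine case exactly as in the paper.
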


Here, by symplectic affine structure, we mean an atlas of holomorphic charts $\{\phi_i:U_i\hookrightarrow\C^2\}$ 
on the smooth part of $S_{\bold{a}}$ sending $\Omega$ to the standart $2$-form $du\wedge dv$ in $\C^2$,
with transition charts $\phi_i\circ\phi_j^{-1}$ belonging to the pseudo-group induced by the symplectic affine group
\begin{equation}\label{eq:affinegroup}
\mathrm{Aff^1}:=\left\{\left.\phi:
\begin{matrix}\C^2&\to&\C^2\\ (u,v)&\mapsto& (au+bv+t,cu+dv+s)\end{matrix}\ \right|
\begin{matrix}a,b,c,d,t,s\in\C,\\ ad-bc=1\end{matrix}
\right\}
\end{equation}
The $\Gamma_2$-invariance means that transformations $\gamma\in\Gamma_2$ become affine transformations
through the affine atlas. 

In the case $\bold{a}=(0,0,0,4)$, the cubic surface is the quotient $\Pi:\C^2\to S_{(0,0,0,4)}$
of $\C^2$ by the group $H=\{(u,v)\mapsto(\pm u+t,\pm v+s)\}$, and the action of $\Gamma_2$
is induced by the group $\mathrm{SL}_2(\Z)$ on the quotient (up to finite index).
So $\left(S_{(0,0,0,4)},\Gamma_2\right)$ inherits a affine structure: affine charts are local inverses of $\Pi$.
The corresponding Painlevé VI equations have special first integrals as shown in \cite{CasalePicard}.

In order to prove Theorem \ref{thm:NoAffine}, one can first easily prove the following 

\begin{thm}\label{lem:parallelizable} 
Assume that a (non empty) Zariski open set $S^o\subset S_{\bold{a}}$ 
admits an affine structure which is invariant under a finite index subgroup $G<\Gamma_2$.
Then, the $3$-web $(\F_1,\F_2,\F_3)$ is parallel through the affine charts.
\end{thm}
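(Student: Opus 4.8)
The plan is to produce, for each foliation, a single element of $\Gamma_2$ that fixes it leaf by leaf, and then to read that element in the invariant affine atlas. For $\{i,j,k\}=\{1,2,3\}$ put $g_i:=\sigma_j\circ\sigma_k$. Since $\sigma_j$ and $\sigma_k$ both leave $x_i$ fixed, we have $x_i\circ g_i=x_i$, so $g_i$ carries each level curve $\{x_i=c\}$ to itself; that is, $g_i$ preserves $\F_i$ leaf by leaf. These elements lie in $\Gamma_2$: the generators are $g_3=\sigma_1\sigma_2$ and $g_1=\sigma_2\sigma_3$, and $g_2=\sigma_1\sigma_3=(\sigma_1\sigma_2)(\sigma_2\sigma_3)=g_3g_1$. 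As $\Gamma$ is a free product of the three involutions, each $g_i$ has infinite order, so it cannot act trivially on an open set of leaves; hence $g_i$ is nontrivial on the generic leaf of $\F_i$.

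First I would pass to a power that respects the given structure. Since $G<\Gamma_2$ has finite index, there is $n_i\ge 1$ with $h_i:=g_i^{\,n_i}\in G$; it still satisfies $x_i\circ h_i=x_i$, hence fixes $\F_i$ leaf by leaf and is nontrivial along the generic leaf. By hypothesis $h_i$ is affine in the invariant atlas, so in affine charts it is an element $A_i\in\mathrm{Aff^1}$, with determinant of the linear part equal to $1$, preserving each leaf of the developed foliation $\F_i$.

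The core of the argument is then to show that $A_i$ is a shear, for then its leafwise-invariant curves are exactly the lines parallel to its axis, and $\F_i$ is a parallel line foliation. Here I would use that $h_i$ acts without fixed point on the generic leaf: working on the projective conic $\{x_i=c\}\subset\PP^2$, which for generic $c$ is a smooth rational curve meeting the line at infinity in two points, one checks that $\sigma_j$ and $\sigma_k$ each exchange those two points at infinity, so $g_i$ (and hence $h_i$) fixes both of them; the two fixed points of $h_i$ on this $\PP^1$ therefore lie at infinity, and $h_i$ acts freely on the affine leaf $\{x_i=c\}\cap S_{\bold a}\cong\PP^1\setminus\{2\text{ pts}\}$. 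Consequently the developing map restricted to a leaf conjugates this fixed-point-free (multiplicative) action to a translation on a line: the leaf develops onto a straight line and $A_i$ restricts to it as a translation. An element of $\mathrm{Aff^1}$ that fixes a family of parallel lines and translates along them is a shear; thus the leaves of $\F_i$ are parallel lines in the affine charts. Running this for $i=1,2,3$ gives that the three foliations are simultaneously parallel, which is the claim.

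The step I expect to be the main obstacle is precisely the passage from leafwise invariance to parallelism, i.e. excluding that $A_i$ is a nontrivial semisimple map with eigenvalues $\lambda,\lambda^{-1}$: such a map also fixes leaf by leaf a pencil, but of conics $\{uv=\text{const}\}$ centered at its fixed point, which is not parallel. What should rule this out is the global geometry of the fibration $x_i\colon S_{\bold a}\to\C$ together with the location of the fixed points of $h_i$ at infinity: because $h_i$ has no fixed point on the Zariski-open $S^o$ and its two leaf-fixed points sit on the divisor at infinity, no finite center for such a conic pencil can occur inside the chart, so the linear part of $A_i$ must have $1$ as an eigenvalue and hence, being of determinant $1$, be unipotent. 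Making this compatibility precise—checking that the affine structure the invariant atlas induces on each leaf has unipotent (parabolic) holonomy, rather than the hyperbolic holonomy of the embedded conic—is the delicate point; once it is settled, the shear alternative is forced and the proof concludes.
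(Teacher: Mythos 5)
Your reduction to the elements $g_i=\sigma_j\circ\sigma_k$ and the passage to a power $h_i=g_i^{n_i}\in G$ match the paper's setup (the paper works with $\phi=\sigma_1\circ\sigma_2$ preserving $\F_3$), and your closing shear argument is essentially the paper's final step. But the core of your proof has a genuine gap, and it is exactly the one you flag yourself: nothing in your argument excludes the hyperbolic alternative. A generic leaf $C_t=S_{\bold{a}}\cap\{x_3=t\}$ is a conic isomorphic to $\C^*$ on which $h_i$ acts multiplicatively, $z\mapsto\lambda z$ with $\lambda+\lambda^{-1}$ determined by $t^2-2$; this action is fixed-point free on the leaf with both fixed points at infinity, yet it is \emph{also} realized affinely by the semisimple map $(u,v)\mapsto(\lambda u,\lambda^{-1}v)$ preserving each conic $\{uv=c\}$ pointwise-freely, with its fixed point lying outside the chart or even outside the developing image. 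Your inference that ``the developing map restricted to a leaf conjugates this fixed-point-free action to a translation, so the leaf develops onto a straight line'' is precisely what fails: the leaf, being a priori not totally geodesic for the affine structure, may develop onto a conic of the pencil $\{uv=c\}$, and the affine structure induced on $\C^*$ may have hyperbolic (multiplicative) holonomy rather than parabolic. Freeness of the action plus fixed points at infinity of the \emph{embedded} compactified leaf does not control the fixed point of the affine germ $A_i$ in the developing coordinates, since the developing map need not extend to the divisor at infinity. So the ``delicate point'' you defer is not a technical check; it is the whole content of the theorem at this stage.

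The paper circumvents this with a dynamical input that your proposal is missing: the trace of the linear part of $\phi$ restricted to $C_t$ is $t^2-2$, so for the infinitely many values $t=\pm 2\cos(\pi p/q)$ the restriction $\phi_t$ is \emph{periodic}, and a suitable power $g^{N}\in G$ fixes the leaf $C_t$ \emph{pointwise} while being nontrivial on $S_{\bold{a}}$. Read in an affine chart, a nontrivial element of $\mathrm{Aff}^1$ fixing a curve pointwise forces that curve to lie in the eigenspace for eigenvalue $1$, hence to be a straight line --- no hypothesis on fixed points at infinity is needed. This produces infinitely many line-leaves accumulating in a chart; a local curvature function (the second derivative $\varphi''=\gamma(u,v)$ of implicit solutions of a first integral) vanishes on this accumulating set and hence identically, so \emph{all} leaves are lines. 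Only then does the shear argument apply: $g^{N}$ preserves every leaf and fixes one pointwise, so it is a nontrivial shear $(u,v)\mapsto(u+tv,v)$, whose invariant lines are exactly the horizontals, giving parallelism. To repair your proof you would need to import this pointwise-fixed-leaf mechanism (or some substitute ruling out an invariant conic pencil); with only the generic, fixed-point-free leaves your argument cannot distinguish the parallel structure from the $\{uv=c\}$ structure, on which all of your stated constraints (leafwise invariance, determinant $1$, fixed points away from the leaf) are equally satisfied.
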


\begin{proof} The problem being symmetric with respect to variables $x_1,x_2,x_3$, 
it is enough to prove that $\F_3$ is parallel in charts. First of all, notice that this does not depend 
on the choice of affine chart since affine transformations preserve parallel lines. Moreover, by analytic continuity,
it is enough to prove the statement for a single local chart. The automorphism $\phi=\sigma_1\circ\sigma_2$
is preserving the foliation $\F_3$. Let us recall its dynamics on $S_{\bold{a}}$ (see \cite[section 5.1]{CantatLoray}).
It can be written as
\begin{equation}\label{eq:phiz}
\phi\ :\ \begin{pmatrix}x_1\\ x_2\end{pmatrix}\mapsto \begin{pmatrix}-1&-x_3\\ x_3&x_3^2-1\end{pmatrix}\begin{pmatrix}x_1\\ x_2\end{pmatrix}+\begin{pmatrix}a_1\\ a_2-a_1 x_3\end{pmatrix}
\end{equation}
We note that, in restriction to $x_3=t$, for a fixed scalar $t\in\C$, the matrix part has determinant $1$ (i.e. it lies in $\mathrm{SL}_2(\C)$) and trace $t^2-2$. Therefore, when $t=\pm 2\cos(\pi\theta)$ with $\theta=\frac{p}{q}$ is rational, with $p,q\in\Z$ relatively prime,
then $\phi_t$ is periodic of order $2q$ in restriction to the curve $C_t:=S_{\bold{a}}\cap\{x_3=t\}$ (a leaf of $\F_3$).
On the other hand, $\phi$ has infinite order in restriction to the curves  $C_{t}$ for generic $t$.
Now, since the group $G$ has finite index in $\Gamma_2$, then $G$ must contain an iterate $g=\phi\circ\cdots\circ\phi$ 
of $\phi$.
For infinitely many $t$ (i.e. corresponding to rational $\theta$), there is an iterate $g^N$ on $S_{\bold{a}}$, 
$N\in\Z\setminus\{0\}$,  which 
fixes point-wise $C_{t}$ and is not the identity. The union of all those curves $C_{t}$ is Zariski dense and intersect
the Zariski-open set on which the symplectic affine structure is well-defined. 
One can find an affine chart $\{\phi_0:U_0\hookrightarrow\C^2\}$ and an infinite sequence $(N_n,t_n)$, $n\in\Z_{>0}$
where $C_{t_n}$ is fixed point-wise by $g^{N_n}$, and the $C_{t_n}$'s are pair-wise distinct, intersecting the chart $U_0$.
Through the coordinate chart $\phi_0$, the curve $C_{t_n}$ is transformed into a smooth analytic curve $C_{t_n}^0$
and the transformation $g^{N_n}$ induces a (non trivial) symplectic affine transformation, fixing $C_{t_n}^0$ point-wise.
After conjugating by an affine transformation, we can assume that $g^{N_n}$ is a linear transformation. 
Then, the fixed point set $C_{t_n}^0$ must correspond to an eigenspace associated to eigenvalue $1$, and be 
is linear, i.e. a line.

Therefore, the image $\F_3^0$ of $\F_3$ by the affine chart $\phi_0$ has the property
that infinitely many leaves are lines, namely those $C_{t_n}^0$. Moreover, by density of rational $\theta$'s among real ones, 
one can insure to have accumulation of those lines/leaves inside the chart $\phi(U_0)$. {\it We claim} that all leaves must be lines.
Indeed, we can define the (holomorphic) curvature of the leaves as follows.
Consider a local holomorphic first integral $f(u,v)$ for $\F_3^0$ in the affine chart; 
maybe changing by an affine transformation,
we can assume ${\partial f}{\partial v}\not=0$.
Then, a leaf is locally defined by $v=\varphi(u)$ for some implicit solution of $f(u,\varphi(u))=0$.
By derivation, we get
$$\frac{\partial f}{\partial u}(u,\varphi(u))+\frac{\partial f}{\partial v}(u,\varphi(u))\cdot \varphi'(u)=0$$
from which we can deduce the derivative at a point $(u,\varphi(u))$:
$$\varphi'(u)=-\frac{f_u}{f_v}$$
(with usual notation $f_u,f_v$ for partial derivatives). After derivating again, we get
$$f_{uu}+2 f_{uv}\cdot \varphi'+f_{vv}\cdot (\varphi')^2+f_v \cdot \varphi''$$
at the point $(u,\varphi(u))$ which gives
$$\varphi''=\underbrace{\frac{-f_{u}^2f_{vv}+2f_{u}f_{v}f_{uv}-f_{v}^2f_{uu}}{f_v^3}}_{\gamma(u,v)}.$$
This holomorphic ``curvature'' function $\gamma(u,v)$ is therefore measuring the second derivating
of the local leaf, and is vanishing identically along all leaves which are lines.
But the latter subset is Zariski dense as mentionned before. 
We already conclude that $\F_3^0$ is a foliation by lines, as claimed before. 

Now, the symplectic affine transformation induced by $g^{N_n}$ is preserving all leaves of $\F_3^0$,
and fixing one point-wise, namely $C_{t_n}^0$. We can modify the affine chart $(u,v)$ such that 
$C_{t_n}^0$ is given by $v=0$, so that $g^{N_n}$ is linear, and takes the form
\begin{equation}\label{eq:phiziterateaffine}
\phi\ :\ (u,v)\mapsto(u+tv,v)
\end{equation}
for some $t\in\C$ (because we have $\phi(u,0)=u$ and $\det(\phi)=1$).
Note that $g^{N_n}$ is not the identity and $t\not=0$.
But the only invariant lines of this transformation are horizontal line, and we conclude that $\F_3^0$ is 
the horizontal foliation $\ker(dv)$, i.e. a foliation by parallel lines, which does not depend on the choosen affine chart.
\end{proof}

We promptly conclude that, under assumptions of Theorem \ref{lem:parallelizable},
the $3$-web is parallelizable. Therefore, Theorem \ref{thm:3webpara} immediately implies Theorem \ref{thm:NoAffine}.

In a similar way, the bi-Lagrangian manifold $(S_{\bold{a}},\Omega,\F_1,\F_2)$
is locally equivalent through affine charts to $(\C^2,du\wedge dv,\ker(du),\ker(dv))$, 
under conclusion of Theorem \ref{lem:parallelizable}, and we promptly deduce that its Hess connection is flat.
Therefore, Theorem \ref{thm:bilagrang} immediately implies Theorem \ref{thm:NoAffine}.

\begin{cor} Assume that a (non empty) Zariski open set $S^o\subset S_{\bold{a}}$ 
admits an affine structure which is invariant under a finite index subgroup $G<\Gamma_2$.
Then, the $3$-web $(\F_1,\F_2,\F_3)$ is parallelizable through the affine charts.
\end{cor}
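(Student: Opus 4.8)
The plan is to deduce the statement directly from Theorem \ref{lem:parallelizable} together with the symmetry of the configuration, the only real content being to upgrade the per-foliation conclusion (\emph{each} $\F_i$ is parallel) to the web-theoretic one (the triple is parallelizable). Theorem \ref{lem:parallelizable} is stated and proved for the single foliation $\F_3$, but its proof uses only that $\F_3$ is preserved by $\phi=\sigma_1\circ\sigma_2\in\Gamma_2$, together with the periodicity of $\phi_t$ in restriction to the leaves $C_t=S_{\bold{a}}\cap\{x_3=t\}$. Since the whole setup is invariant under permutation of the indices $1,2,3$, I would first record that $\F_1$ is preserved by $\sigma_2\circ\sigma_3\in\Gamma_2$ and $\F_2$ by $\sigma_1\circ\sigma_3=(\sigma_1\circ\sigma_2)\circ(\sigma_2\circ\sigma_3)\in\Gamma_2$ (using $\sigma_2^2=\mathrm{id}$), each of these composed involutions fixing the corresponding coordinate and exhibiting exactly the same $\mathrm{SL}_2$-type periodicity in restriction to its invariant leaves. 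Running the argument of Theorem \ref{lem:parallelizable} verbatim for each index then shows that, in one and the same affine chart $\phi_0:U_0\hookrightarrow\C^2$, each of the three images $\F_1^0,\F_2^0,\F_3^0$ is a foliation by parallel lines.

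The second step is to match this with the definition of parallelizability recalled in the introduction: a $3$-web is parallelizable when there exist local coordinates in which it is given by three constant (parallel) foliations. Since the single affine chart $\phi_0$ simultaneously straightens all three foliations into families of parallel lines, it furnishes precisely such a coordinate system, provided the three directions are pairwise distinct. This transversality is automatic at a generic point: the $\F_i$ are the level curves of the coordinate functions $x_i$ in restriction to $S_{\bold{a}}$, and these are pairwise independent on a Zariski-dense open set, so the three parallel directions are genuinely distinct and the straightened families form a bona fide $3$-web. Hence $(\F_1,\F_2,\F_3)$ is parallelizable through $\phi_0$, and since affine transition maps preserve parallel lines, the conclusion does not depend on the chosen chart.

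I do not expect a genuine obstacle here, as the substance is entirely carried by Theorem \ref{lem:parallelizable}. The only points requiring care are bookkeeping: verifying that the relevant products of involutions indeed lie in $\Gamma_2$ (so that the finite-index subgroup $G$ contains a nontrivial iterate of each of them, which is what the argument exploits), and checking transversality of the three straightened foliations so that the parallel families genuinely constitute a $3$-web rather than degenerating. Both are immediate from the explicit form of the $\sigma_i$ and of the $\F_i$.
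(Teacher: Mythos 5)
Your proposal is correct and takes essentially the same route as the paper, which deduces the corollary immediately from Theorem \ref{lem:parallelizable}, whose proof already invokes the symmetry in $x_1,x_2,x_3$ so that each $\F_i$ is parallel in every affine chart. The details you supply --- that $\sigma_2\circ\sigma_3$ and $\sigma_1\circ\sigma_3=(\sigma_1\circ\sigma_2)\circ(\sigma_2\circ\sigma_3)$ lie in $\Gamma_2$ and preserve $\F_1$ and $\F_2$ respectively, and that generic pairwise transversality makes the three straightened families a genuine parallel $3$-web in a single chart --- are precisely the bookkeeping the paper leaves implicit.
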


\section{Parallelizable webs}\label{sec:Webs}

An analytic (singular) $k$-web on a surface, $k\in\mathbb Z_{>1}$, is locally defined (see \cite{PereiraPirio}) 
by a differential equation
$F(x,y,y')=0$ where $F(x,y,z)\in\C\{x,y\}[z]$ is a reduced polynomial of degree $k$ in variable $z$,
with analytic coefficients in $(x,y)$. The leaves of the web are parametrized curves $x\mapsto (x,y(x))$
for solutions $y(x)$ of the differential equation. There are $k$ distinct leaves passing through a general point.
Precisely, since $F$ is reduced, the discriminant of $F$ with respect to $z$
is not identically vanishing, and its zero set $\Sigma$ is a (possibly empty) hypersurface. 
It is the singular set of the web. 
Outside of $\Sigma$, the web locally splits as the superposition of $k$ pairwise transversal foliations
$\F_1,\ldots,\F_k$. A regular $2$-web can be locally straightened to the superposition 
of foliations by parallel lines, e.g. $(y')^2=1$. However, for $k>2$, there are local obstruction to straigthen $k$-webs
to superposition of foliations by parallel lines. Let us consider the case of $3$-webs that we are concerned with.

Let $(\F_1,\F_2,\F_3)$ be a locally decomposed $3$-web, where 
$\F_i$ is the foliation defined by $y'=f_i(x,y)$, where $f_i(0,0)\not= f_j(0,0)$ for $i\not=j$.
We can rewrite the differential equation $dy-f_i(x,y)dx=0$, i.e. as defined by the kernel $\ker(\omega_i)$ of the $1$-form 
$\omega_i=dy-f_i(x,y)dx$. It will be usefull to define a foliation as kernel $\F=\ker(\omega)$ by a general 
non vanishing analytic $1$-form $\omega=a(x,y)dx+b(x,y)dy$: if $b(0,0)\not=0$, then we deduce the differential equation
$\frac{dy}{dx}+\frac{a(x,y)}{b(x,y)}=0$; if $b(0,0)=0$, then necessarily $a(0,0)\not=0$, and we get the differential equation
$\frac{dx}{dy}+\frac{b(x,y)}{a(x,y)}=0$. In other words, the $1$-form $\omega$ defining the foliation $\F$
is not uniquely defined by $\F$, but is uniquely defined up to the multiplication by a non vanishing function.
The following study is standard, and goes back to Blaschke for instance.

\begin{lemma}\label{lem:normalforms3webs} We can locally define the regular $3$-web 
$\mathcal W:=(\F_1,\F_2,\F_3)$ by $1$-forms
$\F_i=\ker(\omega_i)$ satisfying 
\begin{equation}\label{eq:sumzero}
\omega_1+\omega_2+\omega_3=0.
\end{equation}
Moreover, any other triple of $1$-forms $(\tilde\omega_i)_i$ defining the same $3$-web and satisfying relation (\ref{eq:sumzero})
writes $\tilde\omega_i=f(x,y)\omega_i$ for a (common) non vanishing function $f$.
\end{lemma}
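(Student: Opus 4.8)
The plan is to reduce the whole statement to elementary linear algebra over the ring $\C\{x,y\}$ of germs of holomorphic functions at the origin, the only geometric input being the pairwise transversality of the three foliations.

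For existence, I would start from any triple of $1$-forms $\eta_1,\eta_2,\eta_3$ defining $\F_1,\F_2,\F_3$ respectively (each $\eta_i$ non-vanishing, unique up to a non-vanishing factor). Since $\F_1$ and $\F_2$ are transversal at the origin, the forms $\eta_1,\eta_2$ are pointwise linearly independent near $0$, hence form a local coframe; I can therefore write $\eta_3=\alpha\,\eta_1+\beta\,\eta_2$ for unique holomorphic functions $\alpha,\beta$. The key observation is that transversality forces $\alpha$ and $\beta$ to be non-vanishing near the origin: if $\alpha(p)=0$ at some point $p$, then $\eta_3(p)$ would be proportional to $\eta_2(p)$, i.e.\ $\F_3$ and $\F_2$ would be tangent at $p$, contradicting that they form a regular web; the same argument applies to $\beta$. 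Setting $\omega_1:=-\alpha\,\eta_1$, $\omega_2:=-\beta\,\eta_2$ and $\omega_3:=\eta_3$, each $\omega_i$ is a non-vanishing multiple of $\eta_i$, so it still defines $\F_i$, and by construction $\omega_1+\omega_2+\omega_3=0$.

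For the uniqueness clause, suppose $(\tilde\omega_i)_i$ is another triple defining the same web and satisfying $\tilde\omega_1+\tilde\omega_2+\tilde\omega_3=0$. Because $\tilde\omega_i$ and $\omega_i$ both define $\F_i$, there are non-vanishing functions $f_i$ with $\tilde\omega_i=f_i\,\omega_i$. Substituting into the relation and using $\omega_3=-\omega_1-\omega_2$ gives $(f_1-f_3)\,\omega_1+(f_2-f_3)\,\omega_2=0$. Since $\omega_1,\omega_2$ are linearly independent at every point (again transversality), both coefficients must vanish, so $f_1=f_2=f_3=:f$, which is exactly the claimed common factor.

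The argument is essentially a computation in linear algebra, and I do not expect a genuine obstacle. The only point requiring care is the non-vanishing of $\alpha,\beta$ (and the pointwise linear independence invoked in the uniqueness step), which is precisely where the regularity of the web, namely the pairwise transversality of $\F_1,\F_2,\F_3$, enters and cannot be dispensed with.
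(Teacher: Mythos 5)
Your proof is correct and essentially the same as the paper's: both reduce the statement to pointwise linear algebra over $\C\{x,y\}$ using pairwise transversality, and your uniqueness argument is identical to the paper's. The only cosmetic difference is in the existence step, where the paper rescales all three forms symmetrically via the determinant identity $\delta_{2,3}\eta_1+\delta_{3,1}\eta_2+\delta_{1,2}\eta_3=0$, while you expand $\eta_3$ in the coframe $(\eta_1,\eta_2)$ and rescale only two of them --- your triple is the paper's divided by the common non-vanishing factor $\delta_{1,2}$, and your check that $\alpha,\beta$ are non-vanishing corresponds exactly to the paper's use of $\delta_{2,3},\delta_{3,1}\not=0$.
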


\begin{proof} Choose $\omega_i=a_i(x,y)dx+b_i(x,y)dy$ non vanishing $1$-form defining $\F_i$
and let us see how to modify them to get the expected linear dependance.
Since the $3$-web is regular, we get that $\omega_i\wedge\omega_j$ is not vanishing (transversality 
between $\F_i$ and $\F_j$), and therefore 
\begin{equation}
\delta_{i,j}:=\det\begin{pmatrix}a_i & a_j\\ b_i & b_j\end{pmatrix}\not=0.
\end{equation}
One can then write 
\begin{equation}
\underbrace{\delta_{2,3}\omega_1}_{\tilde\omega_1}+\underbrace{\delta_{3,1}\omega_2}_{\tilde\omega_2}+\underbrace{\delta_{1,2}\omega_3}_{\tilde\omega_3}=0
\end{equation}
and get new defining $1$-forms $\tilde\omega_i$ satisfying the given relation. 

Once we get a triple $(\omega_1,\omega_2,\omega_3)$ defining $\mathcal W$ and satisfying (\ref{eq:sumzero}),
then for any non vanishing function $f(x,y)$, the multiple $(f\omega_1,f\omega_2,f\omega_3)$ also defines $\mathcal W$ and satisfies (\ref{eq:sumzero}). Conversely, given any other triple $(f_1\omega_1,f_2\omega_2,f_3\omega_3)$ 
defining $\mathcal W$ and satisfying (\ref{eq:sumzero}), after substituting $\omega_3=-\omega_1-\omega_2$,
we get
$$(f_1-f_3)\omega_1+(f_2-f_3)\omega_2=0$$
which, by transversality, implies 
$$f_1-f_3=f_2-f_3=0$$
and therefore $f_1=f_2=f_3=:f$. 
\end{proof}

\begin{lemma}\label{lem:theta}
Let $(\omega_1,\omega_2,\omega_3)$ satisfying (\ref{eq:sumzero}) as in Lemma \ref{lem:normalforms3webs}.
Then, there exists a unique $1$-form $\theta$ satisfying $d\omega_i=\theta\wedge\omega_i$ for $i=1,2,3$.
Moreover, if we change $(\omega_i)$ by $(f\omega_i)$ for some non vanishing analytic function $f$,
then we change $\theta$ by $\theta+\frac{df}{f}$.
\end{lemma}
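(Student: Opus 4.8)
The plan is to exploit that we work on a surface, so that at each point the two transversal forms $\omega_1,\omega_2$ constitute a coframe and every $2$-form is a single multiple of $\omega_1\wedge\omega_2$. First I would write the sought-after form in this coframe as $\theta=\alpha\,\omega_1+\beta\,\omega_2$ with unknown analytic functions $\alpha,\beta$, and expand the exterior derivatives as $d\omega_1=h_1\,\omega_1\wedge\omega_2$ and $d\omega_2=h_2\,\omega_1\wedge\omega_2$ for analytic functions $h_1,h_2$. A direct computation gives $\theta\wedge\omega_1=-\beta\,\omega_1\wedge\omega_2$ and $\theta\wedge\omega_2=\alpha\,\omega_1\wedge\omega_2$, so the equations $d\omega_i=\theta\wedge\omega_i$ for $i=1,2$ force $\beta=-h_1$ and $\alpha=h_2$. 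This simultaneously establishes existence for the first two indices and pins down $\theta=h_2\,\omega_1-h_1\,\omega_2$ uniquely.

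The point I would emphasize next is that the third equation comes for free from the linear relation \eqref{eq:sumzero}. Indeed, differentiating $\omega_1+\omega_2+\omega_3=0$ yields $d\omega_3=-d\omega_1-d\omega_2$, while $\theta\wedge\omega_3=-\theta\wedge\omega_1-\theta\wedge\omega_2$ by bilinearity; hence $d\omega_3=\theta\wedge\omega_3$ follows automatically from the two equations already solved. This is the structural reason a single $\theta$ can satisfy all three conditions at once, and it is the one place where the normalization \eqref{eq:sumzero} is genuinely used. For uniqueness I would argue directly: if $\theta$ and $\theta'$ both work, then $(\theta-\theta')\wedge\omega_i=0$ for $i=1,2$, so $\theta-\theta'$ is proportional to each of $\omega_1$ and $\omega_2$; transversality ($\omega_1\wedge\omega_2\neq 0$) then forces $\theta-\theta'=0$.

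For the transformation rule the computation is immediate from the Leibniz formula $d(f\omega_i)=df\wedge\omega_i+f\,d\omega_i$. Writing $\tilde\omega_i=f\omega_i$ and substituting $d\omega_i=\theta\wedge\omega_i$, I would rewrite $df\wedge\omega_i=\frac{df}{f}\wedge\tilde\omega_i$ and $f\,\theta\wedge\omega_i=\theta\wedge\tilde\omega_i$, obtaining $d\tilde\omega_i=\bigl(\theta+\frac{df}{f}\bigr)\wedge\tilde\omega_i$. Since the rescaled triple $(\tilde\omega_i)$ again satisfies \eqref{eq:sumzero}, the uniqueness just established identifies its associated form as $\theta+\frac{df}{f}$, as claimed.

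I do not expect a serious obstacle here, since everything reduces to linear algebra in the rank-two cotangent space of the surface. If anything, the only step demanding genuine care is verifying that the relation \eqref{eq:sumzero} makes the $i=3$ equation \emph{redundant} rather than an independent constraint; this redundancy is precisely what guarantees the existence of a single $\theta$ valid for all three indices simultaneously.
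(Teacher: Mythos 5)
Your proposal is correct and follows essentially the same route as the paper: the paper determines $\theta$ from the $i=1,2$ equations ``by linear algebra'' (you make this explicit via the coframe expansion $\theta=h_2\,\omega_1-h_1\,\omega_2$), derives the $i=3$ equation automatically from the relation $\omega_1+\omega_2+\omega_3=0$, and obtains the transformation rule from the same Leibniz equivalence $d\omega=\theta\wedge\omega\Leftrightarrow d(f\omega)=\bigl(\theta+\frac{df}{f}\bigr)\wedge(f\omega)$. Your explicit uniqueness argument via transversality is a welcome detail the paper leaves implicit, but it is the same underlying linear algebra.
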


\begin{proof}Like in the proof of Lemma \ref{lem:normalforms3webs}, one can determine 
$\theta$ from relations $d\omega_i=\theta\wedge\omega_i$ for $i=1,2$ by linear algebra.
Then, by relation (\ref{eq:sumzero}), we automatically get $d\omega_3=\theta\wedge\omega_3$.
For the last part of the statement, notice that 
\begin{equation}\label{eq:chtfacteurtheta}
d\omega=\theta\wedge\omega\ \ \ \Leftrightarrow\ \ \ d(f\omega)=\left(\theta+\frac{df}{f}\right)\wedge(f\omega).
\end{equation}
\end{proof}

\begin{cor}The analytic $2$-form $\kappa:=d\theta$ only depends on the web $\mathcal W:=(\F_1,\F_2,\F_3)$,
does not depend on the choice of the triple $(\omega_1,\omega_2,\omega_3)$.
\end{cor}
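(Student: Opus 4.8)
The plan is to reduce the independence statement entirely to the two preceding lemmas together with the closedness of a logarithmic differential; there is essentially no computation beyond that. First I would recall from Lemma \ref{lem:normalforms3webs} that the web $\mathcal W$ does admit a triple $(\omega_1,\omega_2,\omega_3)$ subject to the normalization (\ref{eq:sumzero}), and, crucially, that \emph{any} other such triple defining the same web and satisfying (\ref{eq:sumzero}) is of the form $(f\omega_1,f\omega_2,f\omega_3)$ for a single non-vanishing analytic function $f$. This is exactly what makes $\kappa$ a candidate invariant: the only freedom in the choice of defining triple is the common scalar factor $f$, so it suffices to check that $d\theta$ is insensitive to replacing $(\omega_i)$ by $(f\omega_i)$.

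Next I would apply Lemma \ref{lem:theta}, which attaches to each admissible triple a \emph{unique} $1$-form $\theta$ with $d\omega_i=\theta\wedge\omega_i$ for $i=1,2,3$, and which records precisely how $\theta$ transforms under rescaling: passing from $(\omega_i)$ to $(f\omega_i)$ replaces $\theta$ by $\theta+\tfrac{df}{f}$. Taking the exterior derivative of this relation gives $d\theta \mapsto d\theta + d\!\left(\tfrac{df}{f}\right)$, so the whole question collapses to showing that the correction term $d\!\left(\tfrac{df}{f}\right)$ vanishes.

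This last point is immediate: since $f$ is non-vanishing, $\tfrac{df}{f}=d(\log f)$ is (locally) exact, hence closed, so $d\!\left(\tfrac{df}{f}\right)=0$. Therefore $d\theta$ is unchanged under the rescaling, and by the normalization freedom described above it is unchanged under every admissible change of triple. I would conclude that $\kappa:=d\theta$ depends only on the web $\mathcal W=(\F_1,\F_2,\F_3)$. The only step requiring any care is conceptual rather than computational, namely confirming that Lemma \ref{lem:normalforms3webs} really exhausts all admissible triples by the single scaling factor $f$; once that is granted, the vanishing of $d(d\log f)$ finishes the argument with no further work.
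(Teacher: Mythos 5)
Your proof is correct and follows exactly the paper's route: the paper's own argument is the one-line observation that $d\left(\theta+\frac{df}{f}\right)=d\theta$, implicitly resting on the uniqueness of the triple up to a common factor (Lemma \ref{lem:normalforms3webs}) and the transformation rule for $\theta$ (Lemma \ref{lem:theta}), which is precisely what you spelled out. Your version simply makes the closedness of $\frac{df}{f}=d(\log f)$ and the reduction to the scaling freedom explicit.
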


\begin{proof}Just observe that $d\left(\theta+\frac{df}{f}\right)=d\theta$.
\end{proof}

We call $\kappa$ the Blaschke curvature of the $3$-web $\mathcal W$.

\begin{thm}\label{lem:Blaschkecurvature2form}
Let $\mathcal W:=(\F_1,\F_2,\F_3)$ be a regular $3$-web and $\kappa$
its Blaschke curvature. Then  $\mathcal W$ is locally parallelizable if, and only if, $\kappa=0$
(i.e. $\kappa$ is the zero $2$-form).
\end{thm}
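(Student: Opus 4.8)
The plan is to prove both implications of Theorem~\ref{lem:Blaschkecurvature2form} by establishing that the Blaschke curvature $\kappa$ is the complete local invariant distinguishing a $3$-web from the flat (parallelizable) model. The easy direction is the necessity: if $\mathcal W$ is parallelizable, then in suitable local coordinates $(u,v)$ the three foliations are given by parallel lines, i.e.\ by constant-coefficient $1$-forms $\eta_1, \eta_2, \eta_3$ with $\eta_1+\eta_2+\eta_3=0$ (after rescaling as in Lemma~\ref{lem:normalforms3webs}). Since each $\eta_i$ is closed, we have $d\eta_i = 0 = 0\wedge\eta_i$, so the $1$-form $\theta$ of Lemma~\ref{lem:theta} may be taken to be $\theta=0$ in these coordinates, whence $\kappa = d\theta = 0$. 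Because $\kappa$ is intrinsic (it is coordinate- and normalization-independent by the Corollary), vanishing in one chart means vanishing identically.

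The substantial direction is the sufficiency: assuming $\kappa = d\theta = 0$, I would construct the parallelizing coordinates. First, since $d\theta=0$ on a simply connected neighborhood, the Poincaré lemma gives a function $g$ with $\theta = dg$; setting $f = e^{-g}$ and replacing $(\omega_i)$ by $(f\omega_i)$, Lemma~\ref{lem:theta} shows the new $\theta$ becomes $\theta + \frac{df}{f} = dg + d(\log f) = dg - dg = 0$. Hence, after this normalization, I may assume the defining $1$-forms satisfy
\begin{equation}\label{eq:allclosed}
\omega_1+\omega_2+\omega_3 = 0 \quad\text{and}\quad d\omega_i = 0 \text{ for } i=1,2,3.
\end{equation}
Each closed $1$-form $\omega_i$ is then locally exact, say $\omega_i = dU_i$ for primitives $U_i$. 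The relation $\omega_1+\omega_2+\omega_3=0$ integrates to $U_1+U_2+U_3 = \text{const}$, which I can normalize to $U_1+U_2+U_3=0$.

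The key step is to recognize that $(u,v) := (U_1, U_2)$ furnishes the desired coordinate chart. Since $\omega_1\wedge\omega_2 = dU_1\wedge dU_2 \neq 0$ by transversality of $\F_1$ and $\F_2$, the map $(U_1,U_2)$ is a local biholomorphism, hence a legitimate coordinate change. In these coordinates, $\F_1 = \ker(\omega_1) = \ker(du)$ and $\F_2 = \ker(\omega_2) = \ker(dv)$ are foliations by parallel lines; and $\F_3 = \ker(\omega_3) = \ker(-dU_1-dU_2) = \ker(d(u+v))$ is likewise a foliation by parallel lines $u+v = \text{const}$. Thus all three foliations are constant in the chart $(u,v)$, so $\mathcal W$ is parallelizable, as required.

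The main obstacle, and the place where care is needed, is the interplay between the two normalizations used in the sufficiency argument: rescaling the $\omega_i$ by $f=e^{-g}$ to kill $\theta$ must be done while preserving the constraint $\sum_i\omega_i=0$, which is exactly what Lemma~\ref{lem:normalforms3webs} guarantees (a \emph{common} factor $f$ preserves the relation). Once this compatibility is secured, the remaining steps are applications of the Poincaré lemma and transversality, which are routine. I should also note that the whole argument is local and uses simple connectedness of the neighborhood, so it produces a genuine local parallelization rather than a global one; this is consistent with the statement, which asserts only local parallelizability.
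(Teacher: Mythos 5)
Your proof is correct and takes essentially the same route as the paper's: the necessity is checked in a parallelizing chart where $\theta=0$, and the sufficiency integrates $\theta=dg$ (the paper writes $\theta=\frac{df}{f}$ directly), rescales the $\omega_i$ by a common factor to make them closed, takes primitives $U_i$ with $U_1+U_2+U_3=0$, and uses $(U_1,U_2)$ as the parallelizing coordinates via transversality. Your added remarks on the compatibility of the rescaling with $\sum_i\omega_i=0$ and on the local/simply-connected nature of the argument are accurate but do not change the substance.
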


Here, parallelizable means that, after a change of coordinates, each foliation is defined by a constant 
$1$-form $\omega_i=a_idx+b_idy$, where $a_i,b_i\in\C$, and $a_ib_j-a_jb_i\not=0$ for $i\not=j$.
Observe that all parallel $3$-webs are equivalent by linear change of coordinates, i.e. equivalent 
for instant to the web defined by $(dx,dy,-dx-dy)$. 

\begin{proof}[Proof of Theorem \ref{lem:Blaschkecurvature2form}]
If $\mathcal W$ is parallelizable, then we can compute the $2$-form $\kappa$
in a system of coordinates where $\mathcal W$ is parallel, say defined by $(dx,dy,-dx-dy)$.
Then, clearly, $\theta=0$ and $\kappa=d\theta=0$.

Conversely, if $\kappa=0$, then one can write $\theta=\frac{df}{f}$ for some non vanishing analytic function $f$.
Then replacing $\omega_i$ by $\tilde\omega_i=\frac{\omega_i}{f}$, we get $d\tilde\omega_i=0$.
Therefore, $\tilde\omega_i=dh_i$ for some analytic function $h_i$, $i=1,2,3$, and choosing the constant 
allow use to insure that $h_1+h_2+h_3=0$. Finally, the change of coordinate $\phi(x,y):=(h_1(x,y),h_2(x,y))$
is sending $\mathcal W$ to the parallel $3$-web defined by $(dx,dy,-dx-dy)$.
\end{proof}

Given a general regular $3$-web $\mathcal W:=(\F_1,\F_2,\F_3)$, 
one can always parallelize $(\F_1,\F_2)$. In other words, we can assume that the $3$-web 
is defined by $(dx,dy, a(x,y)dx+b(x,y)dy)$. In this situation, the Blaschke curvature reads as follows

\begin{prop}\label{prop:computecurvdxdy}
The Blaschke curvature of the $3$-web defined by $(dx,dy, a(x,y)dx+b(x,y)dy)$ is given by
\begin{equation}\label{eq:BlaschkeCurvdxdy}
\kappa=\left(\frac{a_{xy}a-a_x a_y}{a^2}-\frac{b_{xy}b-b_x b_y}{b^2}\right) dx\wedge dy
\end{equation}
(where subscripts $f_x,f_y,f_{xy}$ stand for $\frac{df}{dx},\frac{df}{dy},\frac{d^2f}{dxdy}$).
\end{prop}

\begin{proof} A normalized triple satisfying (\ref{eq:sumzero}) is given by
$(-adx,-bdy, adx+bdy)$. Then $\theta=\frac{b_x}{b}dx+\frac{a_y}{a}dy$
and we get the result.
\end{proof}

\begin{proof}[Proof of Theorem \ref{thm:bilagrang}]
Let us now apply this study to the $3$-web induced by $(dx_1,dx_2,dx_3)$ on the affine surface
$S_{\bold{a}}$ of the introduction, defined as the zero-set in $\C^3$ of the polynomial
\begin{equation}\label{eq:P}
P(x_1,x_2,x_3)=x_1^2+x_2^2+x_3^2+x_1 x_2 x_3-(a_1 x_1+a_2 x_2+a_3 x_3+a_4).
\end{equation}
Through the projection 
$$S_{\bold{a}}\to \C^2\ ;\ (x_1,x_2,x_3)\mapsto(x_1,x_2)$$
the $3$-web projects to:
$$(dx_1,dx_2,P_{x_1}dx_1+P_{x_2}dx_2)$$
where $P_{x_1},P_{x_2}$ stand for 
partial derivatives with respect to $x_1,x_2$ respectively.
Then, setting $a=2x_1+x_2 x_3-a_1$ and $b=2x_2+x_1 x_3-a_2$ in formula (\ref{eq:BlaschkeCurvdxdy}),
and computing partial derivatives as follows
\begin{equation}\label{eq:partialderivative}
\frac{\partial}{\partial x}=\frac{\partial}{\partial x_1}-\frac{P_{x_1}}{P_{x_3}}\frac{\partial}{\partial x_3}\ \ \ \text{and}\ \ \ 
\frac{\partial}{\partial y}=\frac{\partial}{\partial x_2}-\frac{P_{x_2}}{P_{x_3}}\frac{\partial}{\partial x_3},
\end{equation}
we get a large expression (for instance using a symbolic computation software), a rational function of $(x_1,x_2,x_3)$.
Its numerator is a polynomial of degree $6$ in variable $x_3$. Then, using relation $P(x_1,x_2,x_3)=0$ on the surface, 
one can reduce it as a polynomial
of degree $1$ in $x^3$. Then, the curvature is vanishing identically if, and only if, all coefficients of this later polynomial
in variables $(x_1,x_2,x_3)$ vanish. This leads to a system of polynomial equations which reduces, via a Gr\"obner basis,
to $a_1=a_2=a_3=a_4-4=0$. This proves Theorem \ref{thm:3webpara}. This proves Theorem \ref{thm:bilagrang}.
\end{proof}

\section{Flatness of the Hess connection}\label{sec:Hess}



Let $(M,\omega)$ be  a symplectic manifold. 
A foliation $\F$ on $M$ is said Lagrangian if its leaves are locally Lagrangian submanifolds, i.e. if
\begin{equation}\label{eq:Lagrangian1}
\omega(X,Y)=0\ \ \ \text{for any}\ \ \ X,Y\in\Gamma\left(\F\right).
\end{equation}
and
\begin{equation}\label{eq:Lagrangian2}
\dim(\F)=\frac{\dim(M)}{2},
\end{equation}
which is the maximal dimension for a foliation (or submanifold) satisfying (\ref{eq:Lagrangian1}).

A bi-Lagrangian structure on $(M,\omega)$ is the data of a pair $(\F_{1},\F_{2})$ of Lagrangian foliations on $(M,\omega)$ 
which are transversal  to each other; the quadruplet  $(M,\omega,\F_1,\F_2)$ is called a bi-Lagrangian manifold.

\begin{example}\label{ex:TrivialBiLagrangian}
On sufficiently small neighborhood $U\subset M$ of any point $p\in M$,
there exist Darboux coordinates $(x_1\ldots,x_n,y_1,\ldots,y_n)$ for the symplectic structure,
i.e. where the symplectic $2$-form writes $\omega\vert_U=dx_1\wedge dy_1+\ldots+dx_n\wedge dy_n$.
Then, the two foliations $\F_{dx}:=\ker(dx_1\wedge\cdots\wedge dx_n)$ and $\F_{dy}:=\ker(dy_1\wedge\cdots\wedge dy_n)$ 
define a bi-Lagrangian structure. In general, a bi-Lagrangian structure need not be given in a so simple form locally,
as we shall see.
\end{example}

Let $\nabla$ be an affine connection on $M$, i.e. a map
$$
\begin{matrix}
\Gamma(TM)\times\Gamma(TM)&\to& \Gamma(TM)\\
(X,Y)&\mapsto&\nabla_XY
\end{matrix}
$$
which is $\OO$-linear with respect to $X$, $\C$-linear with respect to $Y$,
and satisfying the Leibniz rule:
$$ \nabla_X(gY)=dg(X)\cdot Y+g\cdot\nabla_XY.$$
We call torsion the $\OO$-linear map
$$  T^\nabla\ :\ (X,Y)\mapsto \nabla_XY-\nabla_YX-[X,Y] $$
and we say that $\nabla$ is torsion-free if this map is vanishing identically.

We say that $\nabla$ parallelizes the symplectic form $\omega$ if
\begin{equation} \label{Bieq3}
\omega(\nabla_{X}{ Y},Z)+\omega(Y,\nabla_{ X}{Z})=X\cdot \omega(Y,Z)   
\end{equation} 
for any $X,Y,Z\in\Gamma(TM)$. We say that $\nabla$ preserves a foliation $\F$ on $M$ if
\begin{equation}  \label{Bieq4}
\nabla_XY\in\Gamma\left(\mathcal{F}\right)   
\end{equation}	 
for any $X\in\Gamma(TM)$ and $Y\in\Gamma(F)$.
Then, Hess proved the following result:

\begin{thm}\cite[Theorem 1]{Hess}\label{b20}
	Let	$(M,\omega,\F_1,\F_2)$ be a bi-Lagrangian manifold. Then, there exists  a unique torsion-free connection $\nabla$ on $M$ which parallelizes $\omega$, and preserves both foliations $\F_1,\F_2$.  
\end{thm}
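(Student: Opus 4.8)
The plan is to reduce everything to a computation of Christoffel symbols in coordinates adapted to the two foliations, and to show that the three requirements (torsion-free, parallelizing $\omega$, preserving $\F_1,\F_2$) pin down these symbols uniquely. First I would choose, near any point, coordinates $(x_1,\dots,x_n,y_1,\dots,y_n)$ in which $\F_1$ is the foliation $\{dy=0\}$ and $\F_2$ is $\{dx=0\}$; such coordinates exist by the Frobenius theorem (local first integrals of each foliation) together with transversality, and crucially this weak adaptation does \emph{not} require simultaneous Darboux form — that stronger normalization is exactly what may fail. Since both foliations are Lagrangian, the diagonal blocks of $\omega$ vanish, so $\omega=\sum_{i,j}\omega_{ij}(x,y)\,dx_i\wedge dy_j$ with the matrix $(\omega_{ij})$ invertible by nondegeneracy. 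Writing $\nabla_{\partial_a}\partial_b=\sum_c\Gamma^c_{ab}\partial_c$, the problem becomes determining the $\Gamma^c_{ab}$.

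The first key step is that preservation of both foliations, combined with torsion-freeness, already forces a block structure. Indeed, $\nabla_{\partial_{x_i}}\partial_{x_j}$ must lie in $T\F_1$ (it is $\nabla_X Y$ with $Y\in\Gamma(T\F_1)$), so it equals $\sum_k A^k_{ij}\partial_{x_k}$; symmetrically $\nabla_{\partial_{y_i}}\partial_{y_j}=\sum_k B^k_{ij}\partial_{y_k}$. For the mixed term, $\nabla_{\partial_{x_i}}\partial_{y_j}$ lies in $T\F_2$ (preservation of $\F_2$), while by torsion-freeness $\nabla_{\partial_{y_j}}\partial_{x_i}=\nabla_{\partial_{x_i}}\partial_{y_j}$ must lie in $T\F_1$ (preservation of $\F_1$); transversality then gives $\nabla_{\partial_{x_i}}\partial_{y_j}=0$. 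Thus only the coefficients $A^k_{ij}$ and $B^k_{ij}$, each symmetric in $(i,j)$, survive.

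Next I would feed the parallelizing identity (\ref{Bieq3}) into the basis vectors to solve for $A$ and $B$. Taking $X=\partial_{x_k}$, $Y=\partial_{x_i}$, $Z=\partial_{y_j}$ and using $\nabla_{\partial_{x_k}}\partial_{y_j}=0$ yields
\[
\partial_{x_k}\omega_{ij}=\textstyle\sum_l A^l_{ki}\,\omega_{lj},
\]
so that $A^m_{ki}=\sum_j(\partial_{x_k}\omega_{ij})\,\omega^{jm}$, where $(\omega^{jm})$ is the inverse of $(\omega_{ij})$; the analogous choice $X=\partial_{y_k}$, $Y=\partial_{y_i}$, $Z=\partial_{x_j}$ gives $B^m_{ki}=\sum_j\omega^{mj}\,\partial_{y_k}\omega_{ji}$. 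This determines every Christoffel symbol explicitly and uniformly, which proves uniqueness. For existence I would \emph{define} $\nabla$ by these formulas and verify the three properties. Preservation of the foliations is built into the block form, and parallelization of $\omega$ is checked by running through the remaining basis combinations of (\ref{Bieq3}): the all-$x$ and all-$y$ combinations vanish identically because $\omega$ restricts to zero on each foliation, and the two remaining mixed combinations reproduce the same equations as above after relabeling, so the system is consistent rather than over-determined.

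The main point — and the step I would treat most carefully — is torsion-freeness, i.e. the symmetry $A^m_{ki}=A^m_{ik}$ and $B^m_{ki}=B^m_{ik}$. From the formula for $A$, symmetry in the lower indices is equivalent to $\partial_{x_k}\omega_{ij}=\partial_{x_i}\omega_{kj}$, and for $B$ to $\partial_{y_k}\omega_{ij}=\partial_{y_j}\omega_{ik}$; these are precisely the two families of coefficients of $d\omega$ in this frame. Hence \emph{closedness of $\omega$ is exactly what makes the candidate connection torsion-free}, and I expect this to be the conceptual heart of the argument rather than any lengthy computation. Finally, to pass from the local construction to a global connection, I would invoke the already-proved local uniqueness: on the overlap of two adapted charts both formulas define a connection with the same intrinsic characterization, so they coincide, and the local pieces glue to a well-defined $\nabla$ on all of $M$.
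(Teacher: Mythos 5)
Your proof is correct and complete, but be aware that the paper does not actually contain a proof of this statement: Theorem \ref{b20} is quoted from Hess's paper \cite{Hess}, and the only related internal material is Boyom's intrinsic formula for the connection, recalled as Proposition \ref{prop:Boyom}, where $\nabla$ is given by (\ref{17c}) with $D$ defined by $i_{D(X,Y)}\omega=L_Xi_Y\omega$ as in (\ref{18c}). So your argument is a genuinely self-contained alternative, and it is the natural coordinate counterpart of that invariant construction. The trade-offs are as follows. The Hess--Boyom route is global and coordinate-free: the formula (\ref{17c}) defines $\nabla$ at once on all of $M$, so no gluing is needed, but the verification of torsion-freeness and of $\nabla\omega=0$ is hidden in Cartan-calculus identities using $d\omega=0$ and the integrability of the two foliations. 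Your route works in bi-adapted (not Darboux --- you are right to flag that this weaker normalization is all that Frobenius plus transversality provides) coordinates, and it makes two things transparent that the invariant formula obscures: uniqueness, since preservation of both foliations plus torsion-freeness plus transversality kills the mixed Christoffel symbols and the identity (\ref{Bieq3}) then solves for the two pure blocks as $A_k=(\partial_{x_k}\Omega)\Omega^{-1}$ and $B_k=\Omega^{-1}\partial_{y_k}\Omega$ in your notation; and the exact role of closedness of $\omega$, whose two bigraded families of coefficients in the frame $dx_i\wedge dy_j$ are precisely the lower-index symmetries $A^m_{ki}=A^m_{ik}$, $B^m_{ki}=B^m_{ik}$, i.e.\ torsion-freeness of the candidate. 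Your consistency check of the remaining instances of (\ref{Bieq3}) (trivial when both arguments are tangent to one foliation, repetitions in the mixed cases) and the gluing-by-uniqueness step are both sound, and the argument transfers verbatim to the holomorphic category, which is the setting the paper actually uses. As a sanity check connecting your proof to the paper: for $n=1$ and $\omega=f(x,y)\,dx\wedge dy$ your formulas give $\nabla_{\partial_x}\partial_x=\frac{f_x}{f}\partial_x$, $\nabla_{\partial_y}\partial_y=\frac{f_y}{f}\partial_y$ and vanishing mixed terms, exactly the paper's (\ref{eq:HessConnection}).
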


The connection given by Theorem~\ref{b20} is called Hess connection of the bi-Lagrangian structure.
Theorem \ref{b20} holds also for holomorphic symplectic structure and we can define Hess connection as well.
As a matter of example, we have the following:

\begin{prop}\label{prop:HessConnection}
In $\C^2$, let $\omega=f(x,y)dx\wedge dy$ for an holomorphic function $f$. 
Then we have a bi-Lagrangian manifold $(\C^2,\omega,\F_{dx},\F_{dy})$, whose Hess connection satisfies
\begin{equation}\label{eq:HessConnection}
\nabla_{\partial_x}\partial_x=\frac{f_x}{f}\cdot\partial_x,\ \ \ 
\nabla_{\partial_y}\partial_y=\frac{f_y}{f}\cdot\partial_y,\ \ \ 
\text{and}\ \ \ \nabla_{\partial_x}\partial_y=\nabla_{\partial_y}\partial_x=0.
\end{equation}
where $f_x$ and $f_y$ stand for partial derivatives $\partial_x(f)$ and $\partial_y(f)$ respectively.
\end{prop}

We can rewrite the $\C$-linear map $Y\mapsto\nabla_XY$ 
in the $\OO$-basis $<\partial_x,\partial_y>$ of $T\C^2$:
\begin{equation}\label{eq:HessMatrixConnection}
\nabla_XY=i_X(dY+\Omega Y)\ \ \ \text{where}\ \ \ \Omega= \begin{pmatrix}
\frac{f_x}{f}dx & 0\\0&\frac{f_y}{f}dy
\end{pmatrix}
\end{equation}
for all $X,Y\in\Gamma(T\C^2)$. Before proving Proposition \ref{prop:HessConnection},
let us discuss its integrability.

We say that the connection $\nabla$ is flat (or integrable) when $\nabla\cdot\nabla=0$
which is equivalent to the vanishing of the curvature tensor
$$ R^\nabla\ :\ (X,Y,Z)\mapsto \nabla_X \nabla_Y Z- \nabla_Y \nabla_X Z-\nabla_{[X,Y]}Z$$
which is equivent, in matrix form, to $d\Omega+\Omega\wedge\Omega=0$.
We promptly deduce:

\begin{cor}\label{cor:FlatHess}
The Hess connection of the bi-Lagrangian manifold $(\C^2,\omega,\F_{dx},\F_{dy})$
defined in Proposition \ref{prop:HessConnection} by $\omega=f(x,y)dx\wedge dy$ is flat if, and only if,
\begin{equation}\label{eq:FlatHess}
\frac{\partial^2f}{dx\ dy}\cdot f=\frac{\partial f}{dx}\cdot \frac{\partial f}{dy}.
\end{equation}
\end{cor}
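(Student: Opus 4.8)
The plan is to read off flatness directly from the matrix form of the curvature supplied in (\ref{eq:HessMatrixConnection}), namely from $d\Omega + \Omega\wedge\Omega$ with
$$\Omega = \begin{pmatrix} \frac{f_x}{f}dx & 0 \\ 0 & \frac{f_y}{f}dy \end{pmatrix}.$$

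The first step is to dispose of the quadratic term. Since $\Omega$ is diagonal with each entry a multiple of a single coordinate differential, the product $\Omega\wedge\Omega$ vanishes identically: its diagonal entries are $\left(\frac{f_x}{f}\right)^2 dx\wedge dx = 0$ and $\left(\frac{f_y}{f}\right)^2 dy\wedge dy = 0$, and the off-diagonal entries vanish because $\Omega$ is diagonal. Hence the flatness equation $d\Omega + \Omega\wedge\Omega = 0$ collapses to $d\Omega = 0$.

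The second step is to compute $d\Omega$ entry by entry. For the $(1,1)$-slot one has $d\!\left(\frac{f_x}{f}dx\right) = \partial_y\!\left(\frac{f_x}{f}\right) dy\wedge dx$, and differentiating the quotient gives $\partial_y\!\left(\frac{f_x}{f}\right) = \frac{f_{xy}f - f_x f_y}{f^2}$; for the $(2,2)$-slot one finds $d\!\left(\frac{f_y}{f}dy\right) = \partial_x\!\left(\frac{f_y}{f}\right) dx\wedge dy$ with the same numerator $f_{xy}f - f_x f_y$. Thus both diagonal entries of $d\Omega$ are scalar multiples of $f_{xy}f - f_x f_y$ (differing only by the sign coming from $dy\wedge dx = -dx\wedge dy$), so the matrix $d\Omega$ vanishes precisely when this single expression does.

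The conclusion is then immediate: the Hess connection is flat if and only if $f_{xy}\,f = f_x\,f_y$, which is exactly (\ref{eq:FlatHess}). I do not expect any genuine obstacle here once Proposition \ref{prop:HessConnection} is granted; the only step deserving a moment's care is the observation that the diagonal shape of $\Omega$ forces $\Omega\wedge\Omega = 0$, after which the computation is nothing more than the exterior derivative of a diagonal matrix of $1$-forms.
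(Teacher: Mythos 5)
Your proof is correct and follows essentially the same route as the paper: the paper's own proof is exactly the one-line computation $d\Omega+\Omega\wedge\Omega=d\Omega=\begin{pmatrix}-1&0\\0&1\end{pmatrix}\cdot\frac{f_{xy}f-f_xf_y}{f^2}\,dx\wedge dy$, which you have simply spelled out entry by entry. Nothing is missing; your observation that the diagonal form of $\Omega$ kills the quadratic term is precisely the step the paper compresses into the underbrace $\Omega\wedge\Omega=0$.
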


\begin{proof}We have:
\begin{equation}\label{Weq5}
d\Omega+\underbrace{\Omega\wedge\Omega}_{=0}= d\Omega=
\begin{pmatrix}
- 1& 0\\0&1
\end{pmatrix}\cdot \frac{f_{xy}f-f_x\cdot f_y}{f^2}dx\wedge dy
\end{equation}
\end{proof}

\begin{example}The Hess connection of Example \ref{ex:TrivialBiLagrangian} for $n=1$
is the trivial connection $\nabla=d$ whose curvature vanishes identically.
\end{example}

In order to prove Proposition \ref{prop:HessConnection}, let us recall the following formula 
due to M. N. Boyom for Hess connection (see  \cite[p.14]{Boyom1},  \cite[p.360]{Boyom2}, \cite[p.65]{Boyom3} for more details). 

\begin{prop}\label{prop:Boyom} Let $(M,\omega,\F_1,\F_2)$ be a bi-Lagrangian manifold. The Hess connection $\nabla$ of $(\omega,\F_1,\F_2)$ is given as follows
\begin{equation}\label{17c}
\nabla_{(X_1, X_2)}{(Y_1,Y_2)} = (D(X_1,Y_1)+\mathrm{pr}_1([X_2,Y_1]) , D(X_2,Y_2)+\mathrm{pr}_2([X_1,Y_2]))  
\end{equation}
where $D:\,TM\times TM\longmapsto TM$ is the map verifying
\begin{equation}\label{18c} 
i_{D(X,Y )}\omega= L_Xi_Y\omega,
\end{equation} 	
and 	$\mathrm{pr}_j:TM=\F_1\oplus\F_2\to\F_j$ is the  linear projection.
\end{prop}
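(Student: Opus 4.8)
The plan is to invoke the uniqueness half of Hess' Theorem~\ref{b20}: the Hess connection is the \emph{only} torsion-free connection that parallelizes $\omega$ (relation \eqref{Bieq3}) and preserves $\F_1$ and $\F_2$ (relation \eqref{Bieq4}). Hence it suffices to show that the right-hand side of \eqref{17c} defines an affine connection with exactly these properties; uniqueness then forces it to be $\nabla$. Throughout I use the transversal splitting $TM=\F_1\oplus\F_2$ and write $X=(X_1,X_2)$, $Y=(Y_1,Y_2)$ with $X_j,Y_j\in\Gamma(\F_j)$, and I repeatedly exploit that each $\F_j$ is \emph{involutive} (it is a foliation) and \emph{Lagrangian} (so $\omega$ vanishes on $\F_j\times\F_j$ and pairs $\F_1$ nondegenerately with $\F_2$).

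The computational engine is a single identity for the map $D$ of \eqref{18c}. Writing $L_X(i_Y\omega)$ with the Cartan formula and evaluating against a test field $Z$ gives
\[
\omega\big(D(X,Y),Z\big)=X\cdot\omega(Y,Z)-\omega\big(Y,[X,Z]\big).
\]
From \eqref{18c} I also read off the calculus rules $D(X,gY)=g\,D(X,Y)+(X\cdot g)\,Y$ and $D(gX,Y)=g\,D(X,Y)+\omega(Y,X)\,\sharp(dg)$, where $\sharp(dg)$ is the field $\omega$-dual to $dg$. The second rule shows $D$ is \emph{not} tensorial in $X$, but the defect $\omega(Y,X)$ is harmless: in \eqref{17c} the two arguments of each occurrence of $D$ always sit in the same Lagrangian leaf, where $\omega$ vanishes.

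Next I would check that \eqref{17c} is well posed and is a connection. Well-posedness means $D(X_1,Y_1)\in\Gamma(\F_1)$ for $X_1,Y_1\in\Gamma(\F_1)$, so that the first slot really lands in $\F_1$; this follows from the displayed identity, since for $Z\in\Gamma(\F_1)$ involutivity and the Lagrangian condition give $\omega(D(X_1,Y_1),Z)=0$, whence $D(X_1,Y_1)\in\F_1^{\perp_\omega}=\F_1$. The connection axioms ($\OO$-linearity in $X$, additivity and Leibniz in $Y$) then follow from the two $D$-rules, the Leibniz rule $[gX,Y]=g[X,Y]-(Y\cdot g)X$, and the fact that $\mathrm{pr}_j$ annihilates the corrective terms (which lie in $\F_{3-j}$).

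The two substantive properties are torsion-freeness and \eqref{Bieq3}, and I expect the torsion computation to be the main obstacle. Expanding $T^\nabla=\nabla_XY-\nabla_YX-[X,Y]$ componentwise and using involutivity to evaluate $\mathrm{pr}_j[X,Y]$, all mixed bracket terms cancel and the $\F_1$-component collapses to $D(X_1,Y_1)-D(Y_1,X_1)-[X_1,Y_1]$. Pairing with $\omega(\,\cdot\,,W)$ and applying the displayed identity twice yields
\[
\omega\big(D(A,B)-D(B,A)-[A,B],\,W\big)=d\omega(A,B,W)-W\cdot\omega(A,B),
\]
so closedness $d\omega=0$ together with $\omega(A,B)\equiv0$ for $A,B\in\Gamma(\F_1)$ forces this $\F_1$-component to vanish; the $\F_2$-component is symmetric. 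This antisymmetry-up-to-bracket of $D$ along a Lagrangian foliation, where $d\omega=0$ and the Lagrangian condition must be combined, is the heart of the argument. Finally, to verify \eqref{Bieq3} I would test it on triples $(X,Y,Z)$ with each entry in $\F_1$ or $\F_2$: when $Y,Z$ lie in one foliation both sides vanish (using $\omega$ and foliation-preservation), and in the two mixed cases the displayed $D$-identity exactly absorbs the projected bracket contributions to leave $X\cdot\omega(Y,Z)$. With \eqref{Bieq3}, \eqref{Bieq4}, torsion-freeness and the connection axioms in hand, uniqueness in Theorem~\ref{b20} identifies \eqref{17c} with the Hess connection.
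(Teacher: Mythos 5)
Your proposal is correct, and it necessarily diverges from the paper, because the paper contains no proof of Proposition \ref{prop:Boyom} at all: the formula is simply recalled as a result of M. N. Boyom, with pointers to \cite{Boyom1,Boyom2,Boyom3}. Your route --- verify that the right-hand side of \eqref{17c} defines a torsion-free connection satisfying \eqref{Bieq3} and \eqref{Bieq4}, then invoke uniqueness in Theorem~\ref{b20} --- is in fact exactly the device the authors themselves use shortly afterwards, in the second proof of Proposition \ref{prop:HessConnection}, but applied there only to the two-dimensional normal form $\omega=f\,dx\wedge dy$ rather than to Boyom's general formula; so your argument upgrades their ad hoc verification into a self-contained proof of the general statement. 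I checked the computational core of your sketch and it holds up: the Cartan-formula identity $\omega(D(X,Y),Z)=X\cdot\omega(Y,Z)-\omega(Y,[X,Z])$ is right; both quasi-derivation rules for $D$ are right (with your convention $i_{\sharp(dg)}\omega=dg$), and the non-tensorial defect $\omega(Y,X)\,\sharp(dg)$ indeed never contributes, since in \eqref{17c} the two arguments of $D$ always lie in the same Lagrangian foliation; $D(X_1,Y_1)\in\Gamma(\F_1)$ follows, as you say, from involutivity together with $\F_1^{\perp_\omega}=\F_1$; in the torsion the mixed bracket terms cancel in pairs (using $\mathrm{pr}_1[X_1,Y_1]=[X_1,Y_1]$ and $\mathrm{pr}_1[X_2,Y_2]=0$), leaving $D(X_1,Y_1)-D(Y_1,X_1)-[X_1,Y_1]$, and your identity $\omega(D(A,B)-D(B,A)-[A,B],W)=d\omega(A,B,W)-W\cdot\omega(A,B)$ is precisely the intrinsic formula for $d\omega$, so $d\omega=0$ and $\omega\vert_{\F_1\times\F_1}=0$ kill it; and in the two mixed cases of \eqref{Bieq3} the bracket term produced by the $D$-identity is exactly recovered by pairing the projected bracket against the complementary Lagrangian foliation, leaving $X\cdot\omega(Y,Z)$. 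Two points stated only implicitly should be made explicit in a written version, though neither is a gap: (i) reducing \eqref{Bieq3} to split triples requires observing that the defect $(X,Y,Z)\mapsto X\cdot\omega(Y,Z)-\omega(\nabla_XY,Z)-\omega(Y,\nabla_XZ)$ equals $(\nabla_X\omega)(Y,Z)$ and is therefore $\OO$-multilinear, so checking it on sections of $\F_1$ and $\F_2$ suffices; (ii) the preservation property \eqref{Bieq4} deserves its one-line verification --- it is immediate from the block structure of \eqref{17c} once $D(X_1,Y_1)\in\Gamma(\F_1)$ is established.
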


\begin{proof}[Proof of Proposition \ref{prop:HessConnection}]
We first notice that, in dimension $2$, a symplectic form is just a volume form, and any foliation by curve is automatically Lagrangian.
In order to determine the Hess connexion $\nabla$ of the statement, one promptly compute 
\begin{equation*}
i_{D(\partial_x,\partial_x)}\omega=f_xdy,\ \ \  
i_{D(\partial_y,\partial_y)}\omega=-f_ydx,\ \ \ \text{and}\ \ \ i_{D(\partial_x,0)}\omega=i_{D(0,\partial_y)}\omega=0.
 \end{equation*}
By using (\ref{17c}) and closedness of $\omega$. We deduce 
\begin{equation*}
D(\partial_x,\partial_x)=\frac{f_x}{f}\partial_x,\ \ \  D(\partial_y,\partial_y)=\frac{f_y}{f}\partial_y,\ \ \ \text{and}\ \ \ D(\partial_x,0)=D(0,\partial_y)=0.
\end{equation*}
The statement immediately follows.

Another proof consists in noticing that the connection defined by (\ref{eq:HessMatrixConnection})
satisfies properties of Theorem \ref{b20}, and therefore is the Hess connection by unicity.
\end{proof}

We now apply Corollary \ref{cor:FlatHess} to the bi-Lagrangian manifold $(S_{\bold{a}},\Omega,\F_1,\F_2)$
defined by the foliations $\F_i=\ker(dx_i)$ on the character variety $S_{\bold{a}}=\{P(x_1,x_2,x_3)=0\}$
defined by (\ref{eq:P}) 
$$P(x_1,x_2,x_3)=x_1^2+x_2^2+x_3^2+x_1 x_2 x_3-(a_1 x_1+a_2 x_2+a_3 x_3+a_4)$$
with respect to the symplectic form (\ref{eq:symplectic form})
$$\Omega=\frac{dx_1\wedge dx_2}{2x_3+x_1 x_2-a_3}.$$
Through the projection 
$$S_{\bold{a}}\to \C^2\ ;\ (x_1,x_2,x_3)\mapsto(x_1,x_2)$$
the bi-Lagrangian structure (locally) projects to:
$$(\C^2,\omega, dx_1, dx_2) \ \ \ \text{where}$$
$$\omega=f\cdot dx_1\wedge dx_2 \ \ \ \text{with}\ \ \ f(x_1,x_2)=\frac{1}{2x_3+x_1 x_2-a_3}$$
for a local section $x_3(x_1,x_2)$ of $S_{\bold{a}}\to\C^2$.
Then, we can compute the curvature using the partial derivatives (\ref{eq:partialderivative}) and get
$$\frac{\partial^2f}{\partial x_1\ \partial x_2}\cdot f-\frac{\partial f}{\partial x_1}\cdot \frac{\partial f}{\partial x_2}=$$
$$\frac{a_3\cdot x_1^2x_2^2-4a_1\cdot x_1^2x_2-4a_2\cdot x_1x_2^2+(32-2a_3^2-8a_4)x_1x_2+\cdots}{(2x_3+x_1 x_2-a_3)^6}$$
which is identically vanishing if, and only if, $a_1=a_2=a_3=a_4-4=0$ as can be checked directly on the complete formula.

\section{Other character varieties}

For other Painlevé equations, one can define character varieties (see \cite{vdPutSaito,CMR})
by considering representations of groupoids where Stokes matrices are taken into account 
in the monodromy of the differential equation. 
The explicit list of character varieties has been established by M. van der Put and M.-H. Saito
in \cite[p.2636]{vdPutSaito}, and is given in Table \ref{table:ListCharaterPainleveSvdP}.
\begin{table}[h]
\begin{center}
\begin{tabular}{|c|c|c|}
\hline
type & equation & parameters \\
\hline
$PV$\hfill  & $x_1 x_2 x_3+x_1^2+x_2^2$ & $s_1,s_2\in\C$ \\
& $-(s_1+s_2s_3) x_1-(s_2+s_1s_3) x_2-s_3 x_3$ & $s_3\in\C^*$\\
& $+(s_3^2+s_1s_2s_3+1)=0$ &\\
\hline
$PV^{\mathrm{deg}}$ \hfill & $x_1 x_2 x_3+x_1^2+x_2^2$ & $s_0,s_1\in\C$\\
& $+s_1 x_1+s_2 x_2+1=0$ & \\
\hline
$PIII(D6)$ & $x_1 x_2 x_3+x_1^2+x_2^2$  & $\alpha,\beta\in\C^*$\\
& $+(1+\alpha\beta) x_1+(\alpha+\beta) x_2+\alpha\beta=0$ & \\
\hline
$PIII(D7)$ & $x_1 x_2 x_3+x_1^2+x_2^2+\alpha x_1+ x_2=0$ & $\alpha\in\C^*$\\
\hline
$PIII(D8)$ & $x_1 x_2 x_3+x_1^2-x_2^2-x_1=0$ &\\
\hline
$PIV$\hfill & $x_1 x_2 x_3+x_1^2$ & $s_1\in\C$\\
& $-(s_2^2+s_1s_2) x_1-s_2^2 x_2-s_2^2 x_3$ & $s_2\in\C^*$\\
& $+(s_2^2+s_1s_2^3)=0$ & \\
\hline
$PII^{FN}$\hfill & $x_1 x_2 x_3+x_1-x_2+x_3+s=0$ & $s\in\C$\\
\hline
$PII$\hfill & $x_1 x_2 x_3-x_1-\alpha x_2-x_3+\alpha+1=0$ & $\alpha\in\C^*$\\
\hline
$PI$\hfill & $x_1 x_2 x_3+x_1+x_2+1=0$ & \\
\hline
\end{tabular}
\end{center}
\label{table:ListCharaterPainleveSvdP}
\end{table}

Each polynomial $P$ in the list defines a cubic surface which represents monodromy space 
for a Painlevé equation (named $PV,\cdots,PI$). This surface admits a natural symplectic structure
defined by P. Boalch in \cite{Boalch}, and which writes
\begin{equation}\label{eq:SymplecticPVtoPI}
\Omega=\frac{dx_1\wedge dx_2}{P_{x_3}}=\frac{dx_2\wedge dx_3}{P_{x_1}}
=\frac{dx_3\wedge dx_1}{P_{x_2}}
\end{equation}
with standard notation $P_{x_i}=\frac{\partial P}{\partial x_i}$.

For each of these polynomials $P$, we compute the curvature of the $3$-web $(\F_1,\F_2,\F_3)$, as well as the curvature
of the Hess connection associated to $(P=0,\Omega,\F_i,\F_j)$ for $(i,j)=(1,2),(2,3),(1,3)$. Then, with similar computations
as in Sections (\ref{sec:Webs}) and (\ref{sec:Hess}), we obtain, after projection on $(x_1,x_2)$-plane say, a curvature
$2$-form $F(x_1,x_2,x_3)dx_1\wedge dx_2$ with $F$ rational with respect to $(x_1,x_2,x_3)$.
We can moreover reduce $F$ with the relation $P=0$ so that it is linear with respect to $x_3$.
We thus get an expression $F(\underline{s},x_1,x_2)$, which is rational in $(x_1,x_2)$ with parameters $\underline{s}$, 
and which can be identically zero or not.
In the former case, we promptly deduce that the curvature is identically zero and the $3$-web is parallelizable.
In the latter case, then the coefficients depending on $\underline{s}$ in the numerator of $F$ define an ideal $\mathcal I=\mathcal I(\underline{s})$ whose zero-set is the set of parameters $\underline{s}$ for which the curvature vanishes identically.
In Table \ref{table:curvature}, we provide the ideal of coefficients of the numerator of the curvature of the web (column 2)
and of Hess connection for each pair of foliations (column 3-5) for each family of polynomials (column 1).

\begin{table}[h]
\begin{center}
\begin{tabular}{|c|c|c|c|c|}
\hline 
Name & $\mathcal W$ & $(\F_1,\F_2)$ & $(\F_2,\F_3)$ & $(\F_1,\F_3)$ \\
\hline\hline
\small $PV$ & $(1)$ &  $(s_3)$ & $(1)$ & $(1)$ \\
\hline
\small $PV^{\mathrm{deg}}$ &$(1)$ &   $0$ & $(1)$ & $(1)$ \\
\hline
\small $PIII(D6)$ &$(1)$ &   $0$ & $(1)$ & $(1)$ \\
\hline
\small $PIII(D7)$ &$(1)$ &   $0$ & $(1)$ & $(1)$ \\
\hline
\small $PIII(D8)$ &$(1)$ &   $(1)$ & $(1)$ & $0$ \\
\hline
\small $PIV$ & $(s_2^2)$ &   $(s_2)^2$ & $(s_1s_2,s_2^2)$ & $(s_2^2)$ \\
\hline
\small $PII-FN$ &$0$ &   $(1)$ & $(1)$ & $(1)$ \\
\hline
\small $PII$ &$0$ &   $(1)$ & $(1)$ & $(\alpha)$ \\
\hline
\small $PI$ &$0$ &   $0$ & $(1)$ & $(1)$ \\
\hline
\end{tabular}
\end{center}
\label{table:curvature}
\end{table}

For instance, in the Painlevé I case, $S=\{x_1 x_2 x_3+x_1+x_2+1=0\}$, the $3$-web projects to 
$$\F_1=\ker(dx_1),\ \ \ \F_2=\ker(dx_2)\ \ \ \text{and}\ \ \ \F_3=\ker\left(\frac{dx_1}{x_1(x_1+1)}+\frac{dx_2}{x_2(x_2+1)}\right)$$
which is flat. On the other hand, the curvature of the Hess connection associated to 
$$\left(S,\frac{dx_1}{x_1}\wedge \frac{dx_2}{x_2},\F_1,\F_2\right)$$
is given by $\frac{dx_1\wedge dx_2}{(x_1x_2)^2}$.

\end{document}